\setlist[itemize]{leftmargin=20pt}
\newcommand{\Z}{\ensuremath{\mathbf{Z}}}
\newcommand{\R}{\ensuremath{\mathbf{R}}}
\newcommand{\mc}{\mathcal}
\DeclareMathOperator{\ind}{\mathbf{1}}
\DeclareMathOperator*{\esssup}{ess\,sup}
\renewcommand{\emptyset}{\varnothing}
\def\avint_#1{\mathchoice{\mathop{\kern 0.2em\vrule width 0.6em height 0.69678ex depth -0.58065ex \kern -0.8em \intop}\nolimits_{\kern -0.4em#1}}{\mathop{\kern 0.1em\vrule width 0.5em height 0.69678ex depth -0.60387ex \kern -0.6em \intop}\nolimits_{#1}} {\mathop{\kern 0.1em\vrule width 0.5em height 0.69678ex depth -0.60387ex \kern -0.6em \intop}\nolimits_{#1}} {\mathop{\kern 0.1em\vrule width 0.5em height 0.69678ex depth -0.60387ex \kern -0.6em \intop}\nolimits_{#1}}}
\newcommand{\avg}[1]{\langle #1 \rangle}
\newtheorem{theorem}{Theorem}
\newtheorem{lemma}[theorem]{Lemma}
\newtheorem{proposition}[theorem]{Proposition}
\newtheorem{TheoremLetter}{Theorem}
{}
\theoremstyle{remark}
\newtheorem{remark}[theorem]{Remark}
\theoremstyle{definition}
\numberwithin{theorem}{section}
\numberwithin{equation}{section}
\title{Weighted weak-type bounds for multilinear singular integrals}
\author{Zoe Nieraeth}
\address{Zoe Nieraeth (she/her), BCAM\textendash Basque Center for Applied Mathematics, Bilbao, Spain}
\email{zoe.nieraeth@gmail.com}
\thanks{Z. N. is supported by the grant Juan de la Cierva formación 2021 FJC2021-046837-I, the Basque Government through the BERC 2022-2025 program, by the Spanish State Research Agency project PID2020-113156GB-I00/AEI/10.13039/501100011033 and through BCAM Severo Ochoa excellence accreditation SEV-2023-2026.}
\author{Cody B. Stockdale}
\address{Cody B. Stockdale (he/him), School of Mathematical Sciences and Statistics, Clemson University, Clemson, SC 29634, USA}
\email{cbstock@clemson.edu}
\author{Brandon Sweeting}
\address{Brandon Sweeting (he/him), Department of Mathematics, University of Alabama, Tuscaloosa, AL 35487, USA}
\email{bssweeting@ua.edu}
\begin{document}

\keywords{Muckenhoupt weights, multilinear Calder\'on-Zygmund operators, sparse domination, weak-type estimates}

\subjclass[2020]{Primary: 42B20; Secondary: 42B25}


\begin{abstract}
We establish analogs of sharp weighted weak-type bounds for $m$-sublinear operators satisfying sparse form domination, including multilinear Calder\'on-Zygmund singular integrals. Our results, which hold for general $\vec{p} \in [1,\infty)^m$ and feature quantitative improvements, rely on new local testing conditions and good-$\lambda$ inequalities. We address weak-type bounds in both the change of measure and multiplier settings.
\end{abstract}

\maketitle

\section{Introduction}

The weighted strong-type bound 
\begin{align}\label{LinearStrongType}
    \|Tf\|_{L^p(\R^d,w)}\lesssim_w\|f\|_{L^{p}(\R^d,w)}
\end{align}
implies two particularly interesting weak-type bounds, 
namely, the usual weak-type formulation where the weight is treated as a measure: 
\begin{align}\label{LinearWeakType}
    \|Tf\|_{L^{p,\infty}(\R^d,w)}\lesssim_w \|f\|_{L^{p}(\R^d,w)},
\end{align}
and the multiplier weak-type bound:
\begin{align}\label{LinearMultiplierWeakType}   
    \|T(fw^{-1/p})w^{1/p}\|_{L^{p,\infty}(\R^d)}\lesssim_w\|f\|_{L^{p}(\R^d)}.
\end{align}
It is well-known that if $T$ is a Calder\'on-Zygmund operator, $p \in (1,\infty)$, and a weight $w$ satisfies Muckenhoupt's $A_p$ condition 
$$
    [w]_{A_p}:=\sup_{Q} \langle w\rangle_{1,Q}\langle w^{1-p'}\rangle_{1,Q}^{p-1}<\infty,
$$
then \eqref{LinearStrongType} holds for all $f \in L^p(\R^d,w)$ and, hence, so do \eqref{LinearWeakType} and \eqref{LinearMultiplierWeakType}. Moreover, if 
$$
    [w]_{A_1}:=\sup_{Q}\langle w\rangle_{1,Q} \langle w^{-1}\rangle_{\infty,Q}
    <\infty,
$$
then \eqref{LinearWeakType} and \eqref{LinearMultiplierWeakType} both hold with $p=1$. 

Quantitative versions of these inequalities are much more intricate. In \cite{Hy12}, Hyt\"onen famously proved that if $T$ is a Calder\'on-Zygmund operator, $p\in(1,\infty)$, and $w \in A_p$, then
$$
    \|Tf\|_{L^p(\R^d,w)}\lesssim [w]_{A_p}^{\frac{1}{p} \max\left(p,p'\right)}\|f\|_{L^p(\R^d,w)}.
$$
Improvements can be made to the weak-type bounds inherited from the above sharp strong-type bound. Indeed, if $T$ is a Calder\'on-Zygmund operator, then 
$$
    \|Tf\|_{L^{p,\infty}(\R^d,w)}\lesssim [w]_{A_p}\|f\|_{L^p(\R^d,w)}
$$
for $p>1$ and
$$
    \|Tf\|_{L^{1,\infty}(\R^d,w)}\lesssim (1+\log[w]_{A_1})[w]_{A_1}\|f\|_{L^1(\R^d,w)}, 
$$
and in the multiplier setting

$$
    \|T(fw^{-1/p})w^{1/p}\|_{L^{p,\infty}(\R^d)} \lesssim [w]_{A_p}^{1+1/p}\|f\|_{L^p(\R^d)}
$$
for $p\ge 1$, see \cite{HLMORSU12, LOP09, CS23}.
The dependence in the first inequality above is optimal with respect to $[w]_{A_p}$, and in the case $p=1$, the latter two 
constants above are also sharp, see \cite{HLMORSU12, LNO20,LLORr23}. 

These bounds are further improved by introducing the smaller Fujii-Wilson constant 
\[
[w]_{\text{FW}}:=\sup_{Q}\frac{1}{w(Q)}\int_Q\!M(w\ind_Q)\,\mathrm{d}x \lesssim [w]_{A_p}
\]
as one has the dependence of $[w]_{\text{FW}}^{1/p'}[w]_{A_p}^{1/p}$, $(1+\log[w]_{\text{FW}})[w]_{A_1}$, and $[w]_{\text{FW}}[w]_{A_p}^{1/p}$ in the above three inequalities, respectively, see \cite{HLMORSU12, HP13, CS23}. Moreover, all of these bounds hold for the more general class of operators $T$ satisfying sparse form domination, which means that for all $f,g \in L_0^{\infty}(\R^d)$ there exists a sparse collection $\mathcal{S}$ such that 
$$
\int_{\R^d}\!|Tf||g|\,\mathrm{d}x\lesssim \sum_{Q\in\mathcal{S}}\langle f\rangle_{1,Q}\langle g\rangle_{1,Q}|Q|,
$$
see \cite{Mo12,HLMORSU12,FN19,CS23}. While it is true that Calder\'on-Zygmund operators satisfy the stronger pointwise sparse domination
\[
|Tf(x)|\lesssim\sum_{Q\in\mc{S}}\langle f\rangle_{1,Q}\ind_Q(x),
\]
the class of operators satisfying sparse form domination is even larger and includes some non-integral operators, see \cite{BFP16} and the references therein.

The classical Calder\'on-Zygmund theory was extended to the multilinear setting by Grafakos and Torres in their seminal paper \cite{GT02a}. Connecting the weighted and the multilinear settings, Lerner, Ombrosi, P\'erez, Torres, and Trujillo-Gonz\'alez introduced the multilinear $A_{\vec{p}}$ classes and characterized the weighted bounds for multilinear Calder\'on-Zygmund operators in \cite{LOPTT09}. 
While the results in \cite{LOPTT09} are qualitatively sharp, quantitative bounds in terms of $A_{\vec{p}}$ characteristics are less well understood; progress in this direction was made using sparse domination in \cite{LMS14, CR16, LN15, Zor19, Zh23}. 

\vspace{.05in}
\emph{We establish new and improved weighted weak-type estimates for $m$-sublinear operators satisfying sparse form domination in the change of measure and the multiplier settings. This framework applies to multilinear Calder\'on-Zygmund operators with Dini continuous kernels and their maximal truncations, and to multilinear multipliers that are invariant under simultaneous modulations of the input functions, see \cite{CR16, LN15, DHL18, CDO18}. The novelty of our results includes extensions to general $\vec{p} \in [1,\infty)^m$, quantitative improvements, and the consideration of multilinear multiplier weak-type bounds. Our arguments involve new local testing conditions and good-$\lambda$ inequalities.}
\vspace{.05in}

We say that an $m$-sublinear operator satisfies sparse form domination if for every $f_1,\ldots, f_m,g \in L_0^{\infty}(\R^d)$, there exists a sparse collection $\mathcal{S}$ such that
\begin{equation}\label{eq:introsparseform}
    \int_{\R^d}\!|T\vec{f}||g|\,\mathrm{d}x\lesssim \sum_{Q\in\mathcal{S}}\Big(\prod_{j=1}^{m}\langle f_j\rangle_{1,Q}\Big)\langle g\rangle_{1,Q}|Q|.
\end{equation}
For $p>0$ and a weight $w$, we define
\[
\|f\|_{L^p_w(\R^d)}:=\|fw\|_{L^p(\R^d)}.
\]
For exponents $\vec{p}=(p_1,\ldots,p_m)$ and weights $\vec{w}=(w_1,\ldots,w_m)$, we write
\[
L^{\vec{p}}_{\vec{w}}(\R^d):=\prod_{j=1}^mL^{p_j}_{w_j}(\R^d)
\quad \text{and}\quad \|\vec{f}\|_{L^{\vec{p}}_{\vec{w}}(\R^d)}:=\prod_{j=1}^m\|f_j\|_{L^{p_j}_{w_j}(\R^d)}.
\] 
We extend this notation to weak-Lebesgue spaces by defining
\[
\|f\|_{L^{p,\infty}_w(\R^d)}:=\sup_{\lambda>0}\|\lambda\ind_{\{|f|>\lambda\}}\|_{L^p_w(\R^d)}=\sup_{\lambda>0}\lambda w^p\big(\{|f|>\lambda\}\big)^{\frac{1}{p}}.
\]
When we wish to treat a weight $v$ as a measure, we write
\[
L^p(\R^d,v)\quad \text{and}\quad L^{p,\infty}(\R^d,v),
\]
which, for finite $p$, respectively coincide with $L^p_w(\R^d)$ and $L^{p,\infty}_w(\R^d)$ when $v=w^p$. See \cite{LN23b} for further discussion on this perspective on weights as multipliers and as measures. For $\vec{p} \in [1,\infty]^m$, the multilinear Muckenhoupt condition $\vec{w} \in A_{\vec{p}}$ takes the form 
$$
    [\vec{w}]_{\vec{p}}:= \sup_{Q} \langle w\rangle_{p,Q}\prod_{j=1}^m \langle w_j^{-1}\rangle_{p_j',Q}<\infty,
$$
where $w:=\prod_{j=1}^m w_j$ and $p\in[\tfrac{1}{m},\infty)$ satisfies $\frac{1}{p}=\sum_{j=1}^m\frac{1}{p_j}$. 
In the case $m=1$, we have that $\vec{w} \in A_{\vec{p}}$ if and only if $w^p \in A_p$, and $[\vec{w}]_{\vec{p}}=[w^p]_{A_p}^{1/p}$.

Our inequalities take the following forms: 
\begin{align}\label{MultilinearWeakType}
    \|T\vec{f}\|_{L^{p,\infty}_w(\R^d)}\lesssim_{\vec{w}} \|\vec{f}\|_{L^{\vec{p}}_{\vec{w}}(\R^d)}
\end{align}
and 
\begin{align}\label{MultilinearMultiplierWeakType}
    \|T(\vec{f}/{\vec{w}})w\|_{L^{p,\infty}(\R^d)}\lesssim_{\vec{w}} \|\vec{f}\|_{L^{\vec{p}}(\R^d)}.
\end{align}
Note that \eqref{MultilinearWeakType} and \eqref{MultilinearMultiplierWeakType} respectively generalize \eqref{LinearWeakType} and \eqref{LinearMultiplierWeakType} to the multilinear setting. 

\begin{TheoremLetter}\label{thm:E}
Let $T$ be an $m$-sublinear operator satisfying sparse form domination, let $\vec{p}\in [1,\infty]^m$, and let $p \in [\tfrac{1}{m},\infty)$ satisfy $\tfrac{1}{p}=\sum_{j=1}^m\tfrac{1}{p_j}$. If $\vec{w}\in A_{\vec{p}}$, then
\[
\|T\vec{f}\|_{L^{p,\infty}_w(\R^d)}\lesssim [w^p]_{\text{FW}}[\vec{w}]_{\vec{p}}\|\vec{f}\|_{L^{\vec{p}}_{\vec{w}}(\R^d)}
\]
for all $\vec{f} \in L^{\vec{p}}_{\vec{w}}(\R^d)$. Moreover, if $\vec{p} \in (1,\infty)^m$ with $p >1$, then 
\[
\|T\vec{f}\|_{L^{p,\infty}_w(\R^d)}\lesssim C_{\vec{w}}[\vec{w}]_{\vec{p}}\|\vec{f}\|_{L^{\vec{p}}_{\vec{w}}(\R^d)}
\]
for all $\vec{f} \in L^{\vec{p}}_{\vec{w}}(\R^d)$, where 
\[
C_{\vec{w}}:= \max_{k\in\{1,\ldots,m\}}
\min\Big([w_1^{-p_1'}]_{\text{FW}},\ldots,[w_{k-1}^{-p_{k-1}'}]_{\text{FW}},[w^p]_{\text{FW}},[w_{k+1}^{-p_{k+1}'}]_{\text{FW}},\ldots,[w_m^{-p_m'}]_{\text{FW}}\Big)^{\frac{1}{p_k'}}.
\]
\end{TheoremLetter}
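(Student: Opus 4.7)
The plan is to combine the sparse form domination hypothesis with a testing condition for weak-type bounds and a good-$\lambda$ inequality, and to extract the Muckenhoupt dependence via H\"older and the Fujii--Wilson dependence via the $A_\infty$ characterization.

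\textbf{Reduction and first inequality.} By hypothesis \eqref{eq:introsparseform}, it suffices to prove the claim uniformly over sparse bilinear forms $\Lambda_{\mc{S}}(\vec{f},g) := \sum_{Q\in\mc{S}}\big(\prod_{j=1}^m\langle f_j\rangle_{1,Q}\big)\langle g\rangle_{1,Q}|Q|$. Writing $\|T\vec{f}\|_{L^{p,\infty}_w(\R^d)} = \sup_{\lambda>0}\lambda\,w^p(\{|T\vec{f}|>\lambda\})^{1/p}$ and using $\lambda\,w^p(E_\lambda) \leq \int_{E_\lambda}|T\vec{f}|\,w^p\dd x$, I would reduce the problem to a testing inequality: for every measurable $E$,
\[
\Lambda_{\mc{S}}(\vec{f}, w^p\ind_E) \lesssim K\,\|\vec{f}\|_{L^{\vec{p}}_{\vec{w}}(\R^d)}\,w^p(E)^{1/p'},
\]
with $K$ the target constant. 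The $A_{\vec{p}}$ condition would then be inserted cubewise as $\prod_j\langle f_j\rangle_{1,Q} \leq [\vec{w}]_{\vec{p}}\big(\prod_j\langle f_jw_j\rangle_{p_j,Q}\big)/\langle w\rangle_{p,Q}$ via H\"older with exponents $(p_j, p_j')$ in each slot. A principal cube decomposition of $\mc{S}$ adapted to the functions $f_jw_j$ and $w^p\ind_E$, together with the Fujii--Wilson estimate $\sum_{Q\in\mc{S},\,Q\subset R}\langle w^p\ind_E\rangle_{1,Q}|Q|\lesssim[w^p]_{\text{FW}}\,w^p(R\cap E)$ for each stopping generation, would aggregate into $K = [w^p]_{\text{FW}}[\vec{w}]_{\vec{p}}$ after a final $(m+1)$-fold H\"older with exponents $(p_1,\ldots,p_m,p')$.

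\textbf{Second inequality via multilinear duality.} For the refinement when $\vec{p}\in(1,\infty)^m$ and $p>1$, I would exploit the $(m+1)$-fold symmetry of the sparse form: for each $k\in\{1,\ldots,m\}$, the partial adjoint $T^{*k}$ satisfies sparse form domination with $f_k$ and $g$ interchanged. Since the $A_{\vec{p}}$ condition is preserved under the associated dualization of the weights (see \cite{LN23b}), applying the first inequality to $T^{*k}$ yields an $L^{p_k',\infty}_{w_k^{-1}}$ bound with the Fujii--Wilson constant attached to the new output weight $w_k^{-p_k'}$. Dualizing back to $T$ via Lorentz space duality transfers this into an $L^{p,\infty}_w$ estimate with a Fujii--Wilson factor raised to $1/p_k'$, while a good-$\lambda$ inequality would allow trading $[w_k^{-p_k'}]_{\text{FW}}$ for any of the other Fujii--Wilson constants in the list, producing the inner $\min$. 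Noting that the reduction must be carried out for each $k$ independently, the final bound is the outer $\max_k$, yielding $C_{\vec{w}}$.

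\textbf{Main obstacle.} The principal difficulty I anticipate is establishing the good-$\lambda$ inequality with the correct multilinear Fujii--Wilson dependence; this will require a stopping-time decomposition that handles all input weights simultaneously and tracks the interactions between $w^p$ and each $w_j^{-p_j'}$. A secondary obstacle is to verify that the partial-adjoint dualization preserves the $A_{\vec{p}}$ constant and produces precisely the Fujii--Wilson power $1/p_k'$ after transferring back to $T$.
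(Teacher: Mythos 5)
Your proposal diverges from the paper's proof and contains two genuine gaps, one per inequality.

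\textbf{First inequality.} Your reduction to the testing inequality $\Lambda_{\mc{S}}(\vec{f}, w^p\ind_E) \lesssim K\,\|\vec{f}\|_{L^{\vec{p}}_{\vec{w}}(\R^d)}\,w^p(E)^{1/p'}$, followed by an $(m+1)$-fold H\"older with exponents $(p_1,\ldots,p_m,p')$, only makes sense when $p>1$, since otherwise $p'$ is negative or undefined. But the first inequality of Theorem~\ref{thm:E} covers all $p\in[\tfrac{1}{m},\infty)$, so your argument gives nothing below $p=1$. This is exactly where the paper introduces new machinery: Proposition~\ref{prop:mainlinearization} linearizes the sparse form by discarding cubes on which $M^{\mc{D}}\vec{f}$ is large (so the exceptional set has controlled $v$-measure), and then Kolmogorov is applied with a small exponent $\theta\in(0,p)\cap(0,1]$ rather than $\theta=1$. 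This produces $A^\theta_{\mc{S}}$, an $\ell^\theta$-aggregated sparse operator, which Theorem~\ref{thm:goodlambdav} (a genuinely new good-$\lambda$ inequality comparing $A^q_{\mc{S}}(a)$ with $A^r_{\mc{S}}(a)$ via a weighted John--Nirenberg bound for the sparse height function) then controls by $A^\infty_{\mc{S}}=M^{\mc{S}}$, whence the maximal function bound $[\vec{w}]_{\vec{p}}$ enters. The $[v]_{\text{FW}}^{1/\theta}$ loss from good-$\lambda$ is exactly compensated by the $\theta$-power in the linearization, yielding $[w^p]_{\text{FW}}[\vec{w}]_{\vec{p}}$. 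You do not have an analogue of this for $p\leq 1$, so the proposal as written does not prove the stated range.

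\textbf{Second inequality.} The duality route is not available here. While $(L^{p_k',1})^* = L^{p_k,\infty}$, the dual of $L^{p_k',\infty}$ is not a Lorentz space you can use to transfer the bound on the partial adjoint $T^{*k}$ back to a weak-type $L^{p,\infty}_w$ bound for $T$; weak-type estimates do not dualize the way strong-type ones do. Moreover, the claim that ``a good-$\lambda$ inequality would allow trading $[w_k^{-p_k'}]_{\text{FW}}$ for any other Fujii--Wilson constant in the list'' has no supporting mechanism; no such interchange appears in the literature, and Theorem~\ref{thm:goodlambdav} here is a single-weight comparison of $\ell^q$ against $\ell^r$ sums, not a device for swapping weights. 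The paper's actual route is quite different: Theorem~\ref{thm:D} establishes a \emph{local} testing characterization of the weak-type norm of $A_{\mc{S}}$ (via Kolmogorov and a level-set slicing of $\langle\ind_E\rangle^v_{1,Q}$), and then the testing quantity is estimated directly by a principal cube construction $\mc{E}_j$ for each $j$, the two-weight sparse estimate \eqref{eq:covlemma}, Lemma~\ref{lem:multilineartesting3}, and a single application of the sharp reverse H\"older inequality (Theorem~\ref{thm:sharprh}) to whichever weight $v_k$ among $\{v_0=w^p,\,v_1,\ldots,v_{m-1}\}$ has the smallest $[v_k]_{\text{FW}}$ — that freedom is the source of the inner $\min$. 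The outer $\max_k$ arises because the sum over stopping cubes splits into $m$ cases indexed by which $Q_{j_0}$ is innermost, and each case produces a different $p_{j_0}'$ in the exponent. None of this is duality.

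So: your intuition that both a testing condition and a good-$\lambda$ inequality are the right tools is consistent with the paper, but the testing framework you set up is too weak for $p\leq1$, and the duality-plus-swap mechanism you propose for the refined constant $C_{\vec{w}}$ does not hold together.
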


The second part of Theorem \ref{thm:E} implies that if $\vec{p} \in (1,\infty)^m$ with $p>1$ and $\vec{w} \in A_{\vec{p}}$, then 
\begin{equation*}
\|T\vec{f}\|_{L^{p,\infty}_w(\R^d)}\lesssim [\vec{w}]^{\min(\alpha,\beta)}_{\vec{p}}\|\vec{f}\|_{L^{\vec{p}}_{\vec{w}}(\R^d)}
\end{equation*}
for all $\vec{f} \in L^{\vec{p}}_{\vec{w}}(\R^d)$, where
\[
\alpha:=1+\max_{k\in\{1,\ldots,m\}}\min\Big(\tfrac{p_1'}{p_k'},\ldots,\tfrac{p_{k-1}'}{p_k'},\tfrac{p}{p_k'},\tfrac{p_{k+1}'}{p_k'},\ldots,\tfrac{p_m'}{p_k'}\Big)
\]
and
\[
\beta:=\max(p,p_1',\ldots,p_m').
\]
The exponent $\beta$ above comes from the following sharp strong-type estimate of \cite{LMS14, CR16, LN15}: if $\vec{p} \in (1,\infty]^m$ with $p \in (\tfrac{1}{m},\infty)$ and $\vec{w} \in A_{\vec{p}}$, then 
\begin{equation}\label{MultilinearSharpStrongType}
    \|T\vec{f}\|_{L^p_w(\R^d)} \lesssim [\vec{w}]_{\vec{p}}^{\max (p,p_1',\ldots,p_m')}\|\vec{f}\|_{L_{\vec{w}}^{\vec{p}}(\R^d)}.
\end{equation}
for all $\vec{f} \in L_{\vec{w}}^{\vec{p}}(\R^d)$. Note that $\alpha < \beta$ for certain $\vec{p}$ and, hence, Theorem~\ref{thm:E} improves the bound inherited \eqref{MultilinearSharpStrongType} for such $\vec{p}$, see \cite{Zh23} for the case $m=2$. Further, Theorem~\ref{thm:E} gives a dependence of $[\vec{w}]_{\vec{p}}^{p+1}$ for general $\vec{p} \in [1,\infty]^m$, which provides an improvement over \eqref{MultilinearSharpStrongType} for all $\vec{p}$ for which $\tfrac{1}{m}\leq p\leq \tfrac{1}{(m+\frac{1}{4})^{\frac{1}{2}}-\frac{1}{2}}$. Indeed, in this case
\[
1-(p+1)(1-\tfrac{1}{pm})=\tfrac{1}{pm}-p+\tfrac{1}{m}=\tfrac{p}{m}\big(\big(\tfrac{1}{p}+\tfrac{1}{2})^2-(m+\tfrac{1}{4}\big)\big)\geq 0,
\]
so that
\[
\max(p,p_1',\ldots,p_m')=\frac{1}{1-\max(\frac{1}{p_1},\ldots,\frac{1}{p_m})}\geq\frac{1}{1-\frac{1}{pm}}\geq p+1.
\]
Note that when $m=2$, this is the entire range $\tfrac{1}{2}\leq p\leq 1$.

In our next result, we use the following notion of sparse form domination of $\ell^p$ type: \begin{equation}\label{eq:introsparseformellp}
  \int_{\R^d}\!|T\vec{f}|^p|g|\,\mathrm{d}x\lesssim \sum_{Q\in\mathcal{S}}\Big(\prod_{j=1}^{m}\langle f_j\rangle_{1,Q}\Big)^p\langle g\rangle^p_{1,Q}|Q|.
\end{equation}
While any known example of an operator satisfying \eqref{eq:introsparseform} also satisfies \eqref{eq:introsparseformellp} for $p\in (0,1]$, it is not clear if this implication always holds, see \cite[Conjecture~6.2]{LN20}. 
Regardless, this hypothesis is still very general and is satisfied by our operators of interest. In particular, one has the following pointwise sparse bound from \cite{CR16, LN15, DHL18}: if $T$ is a (maximal truncation of) a multilinear Calder\'on-Zygmund operator (with Dini-continuous kernel) and $f_1,\ldots,f_m \in L_0^{\infty}(\R^d)$, then there exists a sparse collection $\mathcal{S}$ such that 
\begin{equation}\label{eq:introsparsepointwise}
   |T\vec{f}(x)|\lesssim\sum_{Q \in \mathcal{S}}\prod_{j=1}^m \langle f_j\rangle_{1,Q}\ind_Q(x)=: A_{\mc{S}}\vec{f}(x)
\end{equation}
for almost all $x \in \R^d$. 
Note that if an $m$-sublinear operator $T$ satisfies \eqref{eq:introsparsepointwise}, then $T$ necessarily satisfies \eqref{eq:introsparseformellp} for $p \in (0,1]$. 

\begin{TheoremLetter}\label{MLMultiplierWeakType}
    Let $T$ be an $m$-sublinear operator satisfying sparse form domination, let $\vec{p} \in [1,\infty]^m$, and let $p\in [\tfrac{1}{m},\infty)$ satisfy $\frac{1}{p} = \sum_{j=1}^m\frac{1}{p_j}$. If $w \in A_{\vec{p}}$ and $p\geq 1$, then
    \begin{equation}\label{eq:eq:MLMultiplierWeakType}
        \|T(\vec{f}/\vec{w})w\|_{L^{p,\infty}(\R^d)}\lesssim [w^p]_{\text{FW}}[\vec{w}]_{\vec{p}}\|\vec{f}\|_{L^{\vec{p}}(\R^d)}
    \end{equation}
    for all $\vec{f}\in L^{\vec{p}}(\R^d)$. Moreover, if $p<1$ and $T$ satisfies sparse form domination of $\ell^p$ type, then \eqref{eq:eq:MLMultiplierWeakType} remains valid.
\end{TheoremLetter}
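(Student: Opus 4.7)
My plan is to adapt the strategy of Theorem~\ref{thm:E} to the multiplier setting. For $p>1$, the standard Kolmogorov-type characterization of weak $L^p$,
\[
\|F\|_{L^{p,\infty}(\R^d)}\approx \sup_{0<|E|<\infty}|E|^{1/p-1}\int_E|F(x)|\,\dd x,
\]
reduces \eqref{eq:eq:MLMultiplierWeakType} to the testing estimate
\[
|E|^{1/p-1}\int_E|T(\vec{f}/\vec{w})(x)|w(x)\,\dd x\lesssim[w^p]_{\text{FW}}[\vec{w}]_{\vec{p}}\|\vec{f}\|_{L^{\vec{p}}(\R^d)}
\]
uniformly in measurable $E$ of finite measure, with the borderline case $p=1$ handled either by a direct Calder\'on-Zygmund decomposition of $\vec{f}$ or by passing to the limit $p\downarrow 1$. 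Applying the sparse form domination hypothesis with $g=w\ind_E$ yields
\[
\int_E|T(\vec{f}/\vec{w})|w\,\dd x\lesssim\sum_{Q\in\mc{S}}\Big(\prod_{j=1}^m\langle f_j/w_j\rangle_{1,Q}\Big)\langle w\ind_E\rangle_{1,Q}|Q|.
\]

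To control this sum, I would apply H\"older's inequality coordinatewise,
\[
\langle f_j/w_j\rangle_{1,Q}\leq \langle|f_j|^{p_j}\rangle_{1,Q}^{1/p_j}\langle w_j^{-p_j'}\rangle_{1,Q}^{1/p_j'}
\]
and
\[
\langle w\ind_E\rangle_{1,Q}\leq \Big(\tfrac{w^p(E\cap Q)}{w^p(Q)}\Big)^{1/p}\Big(\tfrac{|E\cap Q|}{|Q|}\Big)^{1/p'}\langle w^p\rangle_{1,Q}^{1/p},
\]
and then invoke the $A_{\vec{p}}$ inequality $\langle w^p\rangle_{1,Q}^{1/p}\prod_j\langle w_j^{-p_j'}\rangle_{1,Q}^{1/p_j'}\leq[\vec{w}]_{\vec{p}}$ to absorb the weight averages. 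The problem reduces to proving
\[
\sum_{Q\in\mc{S}}\Big(\prod_{j=1}^m\langle|f_j|^{p_j}\rangle_{1,Q}^{1/p_j}\Big)\Big(\tfrac{w^p(E\cap Q)}{w^p(Q)}\Big)^{1/p}\Big(\tfrac{|E\cap Q|}{|Q|}\Big)^{1/p'}|Q|\lesssim[w^p]_{\text{FW}}|E|^{1-1/p}\|\vec{f}\|_{L^{\vec{p}}(\R^d)}.
\]
This should follow from a principal-cube decomposition of $\mc{S}$ with respect to the $w^p$-density of $E\cap Q$: the Carleson packing over the stopping tree contributes the factor $[w^p]_{\text{FW}}$, while within each principal generation the remaining sum telescopes via H\"older and the standard Carleson embedding to $|E|^{1-1/p}\|\vec{f}\|_{L^{\vec{p}}(\R^d)}$, in close analogy with Theorem~\ref{thm:E}.

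For $p<1$, the plan is to use $\|F\|_{L^{p,\infty}(\R^d)}^p=\||F|^p\|_{L^{1,\infty}(\R^d)}$, apply the hypothesized $\ell^p$ sparse form domination to $|T(\vec{f}/\vec{w})|^p$, and invoke a good-$\lambda$ inequality comparing $|T(\vec{f}/\vec{w})|w$ to a suitable sparse maximal operator at the scale of $w$, as referenced in the introduction, since $L^{1,\infty}$ lacks a Kolmogorov-type duality. The main obstacle I anticipate is this $p<1$ case: the extra $p$-power on the test function side in the $\ell^p$ form together with the unavailability of duality for $L^{1,\infty}$ prevent the clean reduction used for $p\geq 1$, forcing one to organize the argument within a good-$\lambda$ iteration rather than a single testing inequality. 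A secondary difficulty throughout is the principal-cube step, where one must carefully adapt the stopping time to $w^p$ in order to extract the sharp $[w^p]_{\text{FW}}$ dependence rather than a weaker $A_\infty$-type constant.
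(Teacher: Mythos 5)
Your proposal opens in the same spirit as the paper (Kolmogorov reduction, sparse form domination with $g=w\ind_E$, coordinatewise H\"older, $A_{\vec{p}}$), but it diverges at the decisive step, and the divergence leaves a real gap. The paper's proof for $p\geq1$ rests on two ingredients that you do not use: (i) a Calder\'on--Zygmund decomposition of each $f_j^{p_j}$ at height $K/|E|$, with the exceptional set $\Omega=\bigcup_j\Omega_j$ removed from $E$ to form $E'$. On cubes meeting $E'$ the bad parts vanish (they have zero mean on their CZ cubes), so $\langle f_j\rangle_{p_j,Q}=\langle g_j\rangle_{1,Q}^{1/p_j}$ with $\|g_j\|_{L^\infty}\lesssim K/|E|$ and $\|g_j\|_{L^1}\lesssim 1$. (ii) The sharp reverse H\"older inequality for $w^p\in A_{\text{FW}}$, which produces an exponent $\nu$ with $\nu'\eqsim[w^p]_{\text{FW}}$ and lets one write $\langle w\ind_{E'}\rangle_{1,Q}\leq\langle w\rangle_{p\nu,Q}\langle\ind_{E'}\rangle_{(p\nu)',Q}\lesssim\langle w\rangle_{p,Q}\langle\ind_{E'}\rangle_{(p\nu)',Q}$. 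After invoking $A_{\vec p}$ and sparseness, the remaining sum is handled by the $L^r$ boundedness of $M$ and interpolation of the $L^1$--$L^\infty$ norms of the $g_j$; the constant $[w^p]_{\text{FW}}$ arises from both the $(r')$ factor in the maximal bound and the quantitative reverse H\"older step.

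Your alternative weight split $\langle w\ind_E\rangle_{1,Q}\le(w^p(E\cap Q)/w^p(Q))^{1/p}(|E\cap Q|/|Q|)^{1/p'}\langle w^p\rangle_{1,Q}^{1/p}$ is correct, but the claim that a principal-cube decomposition in $w^p(E\cap Q)/w^p(Q)$ plus a Carleson embedding yields the stated bound is not substantiated, and I do not see how it can, for two reasons. First, without the CZ truncation of $f_j^{p_j}$, the factors $\langle|f_j|^{p_j}\rangle_{1,Q}^{1/p_j}$ are unbounded and do not interact with the two density factors in a way that the Carleson embedding can absorb; the paper's exceptional-set removal is exactly what controls them. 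Second, the constant $[w^p]_{\text{FW}}$ in this theorem is not extracted via a packing estimate over a stopping tree: it comes through the sharp reverse H\"older exponent. A density-based principal-cube argument would more plausibly produce an $A_\infty$-type Carleson constant, but the dependence on the specific parameter choice $\nu'=2^{d+1}[w^p]_{\text{FW}}$ is essential to land on the quantity in the statement. For $p<1$, your plan to replace the Kolmogorov reduction with a good-$\lambda$ iteration is also off-track: the paper runs the same scheme (Kolmogorov variant with exceptional set, CZ decomposition, reverse H\"older) after the reduction $\|F\|_{L^{p,\infty}}^p=\||F|^p\|_{L^{1,\infty}}$ and the $\ell^p$ sparse form; no good-$\lambda$ argument is needed, and for the multiplier problem the Lebesgue-measure CZ decomposition is still available since the unweighted Kolmogorov lemma applies. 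In short: incorporate the Calder\'on--Zygmund decomposition of $f_j^{p_j}$ and the sharp reverse H\"older inequality for $w^p$; these are the ingredients your sketch is missing.
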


We discuss our results when applied in the linear setting -- in this case, Theorem~\ref{thm:E} gives 
$$
    \|Tf\|_{L^{p,\infty}(\R^d,w)}\lesssim [w]_{\text{FW}}^{1/p'}[w]_{A_p}^{1/p}\|f\|_{L^p(\R^d,w)}
$$
for $p>1$ and $w \in A_p$, and 
$$
    \|Tf\|_{L^{1,\infty}(\R^d,w)}\lesssim [w]_{\text{FW}}[w]_{A_1}\|f\|_{L^1(\R^d,w)}
$$
for $w \in A_1$, while Theorem~\ref{MLMultiplierWeakType} implies 
$$
    \|T(fw^{-1/p})w^{1/p}\|_{L^{p,\infty}(\R^d)}\lesssim [w]_{\text{FW}}[w]_{A_p}^{1/p}\|f\|_{L^p(\R^d)}
$$ 
for $p\ge 1$ and $w \in A_p$. Note that the $p>1$ case of Theorem~\ref{thm:E} and the $p=1$ case of Theorem~\ref{MLMultiplierWeakType} recover the known sharp quantitative linear bounds. However, the second above estimate improves to a sharp dependence of $(1+\log [w]_{\text{FW}})[w]_{A_1}$, and the third estimate holds with the asymptotically smaller constant $(1+\log [w]_{A_p})^{1/p}[w]_{A_p}^{1+1/p^2}$, see the very recent paper \cite{LLORr24}. 

\begin{remark}
    \emph{The proofs of these improvements in the linear setting use the following fact: if $\mathcal{S}$ is a sparse collection, $f\in L^1_{\text{loc}}(\R^d)$, and $\lambda >0$, then for each 
    $$
    Q \in \mathcal{G}:=\{Q \in \mathcal{S}: \lambda < \langle f\rangle_{1,Q}\leq 2\lambda\}
    $$ 
    there exists $G_Q \subseteq Q$ such that $\{G_Q\}_{Q \in \mathcal{G}}$ is a disjoint collection and $\int_Q |f|\,dx \lesssim \int_{G_Q} |f|\,dx$, see \cite[p.\,68,\,eq.\,(3.4)]{DLR16}, \cite[Lemma~3.2]{CRr20}, and \cite[Lemma~2.3]{LLORr24}. 
We do not believe this result extends to the multilinear setting, and thus we do not expect to recover the case $m=1$. It remains an interesting open question to determine whether or not our bounds are sharp for $m>1$.}
\end{remark}


Theorem~\ref{MLMultiplierWeakType} is a direct generalization of the quantitative multiplier weak-type bound of \cite{CS23} to the multilinear setting. Aside from the qualitative result in the endpoint case $\vec{p}=\vec{1}$ of \cite{LOP19}, Theorem~\ref{MLMultiplierWeakType} is the only known multiplier weak-type bound for multilinear Calder\'on-Zygmund operators. We emphasize that Theorem~\ref{MLMultiplierWeakType} holds for all $\vec{p} \in [1,\infty]^m$ with $p \in [\tfrac{1}{m},\infty)$ for $m$-sublinear operators satisfying pointwise sparse domination.

To place Theorem~\ref{thm:E} into context, we note that one has a quantitative version of \eqref{MultilinearWeakType} in terms of the multilinear Fujii-Wilson condition
\[
[\vec{w}]_{\text{FW}}^{\vec{p}}:=\sup_Q\Big(\int_Q\prod_{j=1}^m w_j^{\frac{p}{p_j}}\,\mathrm{d}x\Big)^{-\frac{1}{p}}\Big(\int_Q\!M_{\vec{p}}\big(w_1^{\frac{1}{p_1}}\ind_Q,\ldots,w_m^{\frac{1}{p_m}}\ind_Q\big)^p\,\mathrm{d}x\Big)^{\frac{1}{p}}
\]
introduced in \cite{Zor19}. 
When $m=1$, $[\vec{w}]_{\text{FW}}^{\vec{p}}=[w]_{\text{FW}}^{1/p}$. It was shown in \cite[Theorem~1.11]{Zor19} that if $T$ satisfies sparse form domination and $\vec{p} \in (1,\infty)^m$ with $p>1$, then 
\begin{equation}\label{eq:zor1}
\|T\vec{f}\|_{L^{p,\infty}_w(\R^d)}
\lesssim B_{\vec{w}} [\vec{w}]_{\vec{p}}\|\vec{f}\|_{L^{\vec{p}}_{\vec{w}}(\R^d)}
\end{equation}
for 
$B_{\vec{w}}:= \max_{k \in \{1,\ldots, m\}} B_{\vec{w}}^k$, where 
\[
B_{\vec{w}}^k:=
\big[(w_1^{-p_1'},\ldots,w_{k-1}^{-p_{k-1}'},w^p,w_{k+1}^{-p_{k+1}'},\ldots,w_m^{-p_m'})\big]^{(p_1,\ldots,p_{k-1},p',p_{k+1},\ldots,p_m)}_{\text{FW}}.
\]
Taking $m=1$, the estimate \eqref{eq:zor1} reduces to the sharp bound
\[
\|Tf\|_{L^{p,\infty}_w(\R^d)}\lesssim [w^p]_{\text{FW}}^{\frac{1}{p'}}[w^p]_{A_p}^{\frac{1}{p}}\|f\|_{L^p_w(\R^d)}\lesssim [w^p]_{A_p}\|f\|_{L^p_w(\R^d)}
\]
for $p>1$; however, using 
\cite[Lemma~1.6]{Zor19} or \cite[Proposition~3.3.3 (ii)]{Ni20}, one has
\[
B_{\vec{w}}^k \lesssim [\vec{w}]_{\vec{p}}^{\max\big(\frac{p_1'}{p_1},\ldots,\frac{p_{k-1}'}{p_{k-1}},\frac{p}{p'},\frac{p_{k+1}'}{p_{k+1}},\ldots,\frac{p_m'}{p_m}\big)}
\]
and, hence,
\eqref{eq:zor1} only gives 
\begin{equation}\label{eq:weakstrongboundintro}
\|T\vec{f}\|_{L^{p,\infty}_w(\R^d)}
\lesssim [\vec{w}]^{\max(p,p_1',\ldots,p_m')}_{\vec{p}}\|\vec{f}\|_{L^{\vec{p}}_{\vec{w}}(\R^d)}
\end{equation}
for $\vec{p} \in (1,\infty)^m$ with $p>1$ and $m>1$. 
Note that \eqref{eq:weakstrongboundintro} coincides with the bound inherited from the strong-type bound \eqref{MultilinearSharpStrongType} 
and, hence, \eqref{eq:zor1} does not yield an improvement with respect to $[\vec{w}]_{\vec{p}}$ for multilinear operators. 

An improvement of  \eqref{eq:weakstrongboundintro} 
was very recently obtained for operators satisfying pointwise sparse domination in the bilinear setting in \cite{Zh23}. For $\vec{p} \in (1,\infty)^2$ with $p>1$, one has
\begin{equation}\label{eq:zh23main}
\begin{split}
\|A_{\mc{S}}(f_1,f_2)\|_{L^{p,\infty}_w(\R^d)}
&\lesssim C_{\vec{w}}
[\vec{w}]_{\vec{p}}
\|f_1\|_{L^{p_1}_{w_1}(\R^d)}\|f_2\|_{L^{p_2}_{w_2}(\R^d)}\\
&\lesssim[\vec{w}]_{\vec{p}}^\alpha
\|f_1\|_{L^{p_1}_{w_1}(\R^d)}\|f_2\|_{L^{p_2}_{w_2}(\R^d)},
\end{split}
\end{equation}
for all $f_j \in L^{p_j}_{w_j}(\R^d)$ with $j=1,2$, where
\[
C_{\vec{w}}:=\max\Big(\min\big([w^p]_{\text{FW}},[w_2^{-p_2'}]_{\text{FW}}\big)^{\frac{1}{p_1'}}, \min\big([w_1^{-p_1'}]_{\text{FW}},[w^p]_{\text{FW}}\big)^{\frac{1}{p_2'}}\Big) 
\]
and
\[
\alpha:=1+\max\big(\min\big(\tfrac{p}{p_1'},\tfrac{p_2'}{p_1'}\big),\min\big(\tfrac{p_1'}{p_2'},\tfrac{p}{p_2'}\big)\big).
\]
This improves \eqref{eq:weakstrongboundintro} for certain $\vec{p}$, see \cite{Zh23}. The second case of Theorem~\ref{thm:E} extends \eqref{eq:zh23main} to general $m$ for operators satisfying sparse form domination. We emphasize that our intermediate steps were developed independently of the work in \cite{Zh23}, but in the end, we rely on their clever application of the sharp reverse H\"older inequality. On the other hand, we showcase a unique perspective in relation to the multilinear Fujii-Wilson constant of \cite{Zor19}, see the above discussion and Appendix~\ref{app:A}. Additionally, our argument provides a new quantitative bound for $p\leq 1$ through the use of a new good-$\lambda$ inequality. 


The proof of \eqref{eq:zh23main} in \cite{Zh23} relies on an equivalence 
with the testing condition
\[
\int_Q\!A_{\mc{S}}(f_1,f_2)v\,\mathrm{d}x\lesssim_{\vec{w}} \|f_1\|_{L^{p_1}_{w_1}(\R^d)}\|f_2\|_{L^{p_2}_{w_2}(\R^d)}v(Q)^{\frac{1}{p'}}
\]
for all $Q\in\mc{S}$, where $v:=w^p$. We show that this equivalence holds for general $m$ and can be improved to a local testing condition that coincides with the 
condition of \cite{LSU09} in the case $m=1$. We give a short, original proof of this result below in Theorem~\ref{thm:D}, and then prove Theorem~\ref{thm:E} in the case $\vec{p}\in(1,\infty)^m$, $p>1$ by verifying this testing condition. Moreover, our general bound, which applies in the case $p\leq 1$, uses the new good-$\lambda$ inequality of Theorem~\ref{thm:goodlambdav} that holds independent interest.

The paper is organized as follows. In Section \ref{Preliminaries}, we collect relevant notation, discuss the dyadic structure, and state Kolmogorov's inequality. In Section \ref{TheoremAPart1}, we establish a local testing theorem for operators satisfying sparse form domination and prove Theorem \ref{thm:E} in the case $p>1$. In Section \ref{TheoremAPart2}, we obtain a linearization, prove a good-$\lambda$ inequality, and verify Theorem \ref{thm:E} in the case $p \leq 1$. We prove Theorem \ref{MLMultiplierWeakType} in Section \ref{TheoremBSection}. We end with 
a concluding discussion in Appendix \ref{app:A}.

\section{Preliminaries}\label{Preliminaries}

\subsection{Notation}
Fix positive integers $d$ and $m$. For $A,B>0$, we write $A \lesssim B$ if there exists $C>0$ (which possibly depends on $d$, $m$, $\vec{p}$, or $T$) such that $A \leq CB$, write $A \eqsim B$ if $A \lesssim B \lesssim A$, and write $A\lesssim_{\alpha}B$ if the implicit constant may depend on a parameter $\alpha$. 
\begin{itemize}
    \item $L^1_{\text{loc}}(\R^d)$ is the space of locally integrable functions on $\R^d$;
    \item We call $w$ a weight if $w \in L^1_{\text{loc}}(\R^d)$ and $w(x)>0$ for almost all $x \in \R^d$;
    \item For a weight $w$ and $A\subseteq \R^d$, we write $w(A):=\int_{\R^d} w\,dx$ and write $|A|$ when $w \equiv 1$;
    \item A cube is a set in $\R^d$ of the form $\prod_{j=1}^d[a_j,b_j)$ with $b_j-a_j$ equal for all $j \in \{1,\ldots,d\}$; 
    \item For a collection of cubes $\mc{P}$ and a cube $Q$, we write $\mc{P}(Q):= \{Q' \in \mc{P}: Q'\subseteq Q\}$;
    \item For a measurable $f$, $p>0$, a weight $w$, and a cube $Q$, we write 
    $$
    \langle f\rangle_{p,Q}^w:=\left(\frac{1}{w(Q)}\int_Q\!|f|^pw\,\mathrm{d
}x\right)^{1/p},
$$ 
we omit the superscript $w$ when $w\equiv 1$, and we define $\langle f\rangle_{\infty,Q}:=\esssup_{x\in Q}|f(x)|$;
    \item For a weight $w$ and $p >0$, we write
$$
    \|f\|_{L^p_w(\R^d)}:=\|fw\|_{L^p(\R^d)} \quad \text{and}\quad \|f\|_{L^{p,\infty}_w(\R^d)}:=\sup_{\lambda>0}\|\lambda\ind_{\{|f|>\lambda\}}\|_{L^p_w(\R^d)};
$$
    \item $L_{0}^{\infty}(\R^d)$ is the space of essentially bounded functions on $\R^d$ with compact support;
    \item For $p \in [1,\infty]$, the H\"older conjugate $p'$ is defined by $\tfrac{1}{p}+\tfrac{1}{p'}=1$;
    \item We write $\sup_Q$ to indicate a supremum taken over all cubes $Q$ in $\R^d$;
    \item For a weight $w$ and $p \in (1,\infty)$, we write $w \in A_p$ if 
$$
    [w]_{A_p}:=\sup_Q\langle w \rangle_{1,Q}\langle w^{1-p'}\rangle_{1,Q}^{p-1}<\infty
$$
    and $w \in A_1$ if 
$$    
    [w]_{A_1}:= \sup_Q \langle w\rangle_{1,Q}\langle w^{-1}\rangle_{\infty,Q}<\infty;
$$
    \item For a collection of cubes $\mathcal{P}$ and a weight $w$, we write 
    $$
        M^{\mathcal{P},w}f := \sup_{Q\in\mathcal{P}}\langle f\rangle_{1,Q}^w\ind_Q,
    $$
    where we omit the superscripts when either $\mathcal{P}$ is the collection of all cubes or $w\equiv 1$;
    \item For a weight $w$, we write $w \in A_{\text{FW}}$ if 
$$
    [w]_{\text{FW}}:=\sup_{Q}\frac{1}{w(Q)}\int_Q\!M(w\ind_Q)\,\mathrm{d}x<\infty;
$$
    \item For measurable $f_1,\ldots,f_m$, we write $\vec{f}:=(f_1,\ldots,f_m)$; 
    \item For measurable $\vec{f}$ and weights $\vec{w}$, we write $\vec{f}/\vec{w}:=(f_1w_1^{-1},\ldots,f_mw_m^{-1});$
    \item For $p_1,\ldots, p_m \in (0,\infty]$, we write $\vec{p}:=(p_1,\ldots,p_m)$;
    \item For $\vec{p} \in (0,\infty]^m$, $p$ is defined by $\frac{1}{p}=\sum_{j=1}^{m}\frac{1}{p_j}$;
    \item For weights $\vec{w}$, we write $w:=\prod_{j=1}^m w_j$;
    \item For weights $\vec{w}$ and $\vec{p} \in (0,\infty]^m$, we write 
    $$
        L^{\vec{p}}_{\vec{w}}(\R^d):= \prod_{j=1}^{m}L^{p_j}_{w_j}(\R^d) \quad\text{and}\quad \|\vec{f}\|_{L^{\vec{p}}_{\vec{w}}(\R^d)}:=\prod_{j=1}^m\|f_j\|_{L^{p_j}_{w_j}(\R^d)};
    $$
\item For weights $\vec{w}$ and $\omega$ and $\vec{p} \in [1,\infty)^m$, we write $(\vec{w},\omega) \in A_{\vec{p}}$ if
$$
   [\vec{w},\omega]_{\vec{p}}:=\sup_Q\langle \omega\rangle_{p,Q}\prod_{j=1}^m\langle w_j^{-1}\rangle_{p_j',Q}<\infty;
$$
    \item For weights $\vec{w}$ and $\vec{p}\in[1,\infty)^m$, we write $\vec{w}\in A_{\vec{p}}$ if
\[
[\vec{w}]_{\vec{p}}:=[\vec{w},w]_{\vec{p}}<\infty;
\]
\item For $\vec{p} \in (0,\infty]^m$, we write 
\[
    M_{\vec{p}}\vec{f}:=\sup_Q\prod_{j=1}^m\langle f_j\rangle_{p_j,Q}\ind_Q;
\]
\item For weights $\vec{w}$, we write
\[
[\vec{w}]_{\text{FW}}^{\vec{p}}:=\sup_Q\Big(\int_Q\prod_{j=1}^m w_j^{\frac{p}{p_j}}\,\mathrm{d}x\Big)^{-\frac{1}{p}}\Big(\int_Q\!M_{\vec{p}}\big(w_1^{\frac{1}{p_1}}\ind_Q,\ldots,w_m^{\frac{1}{p_m}}\ind_Q\big)^p\,\mathrm{d}x\Big)^{\frac{1}{p}};
\]
\item For a collection of cubes $\mc{P}$, we write 
\[
A_{\mc{P}}\vec{f}:=\sum_{Q\in\mc{P}}\prod_{j=1}^m\langle f_j\rangle_{1,Q}\ind_Q \quad\text{and}\quad M^{\mc{P}}\vec{f}:=\sup_{Q\in\mc{P}}\prod_{j=1}^m\langle f_j\rangle_{1,Q}\ind_Q;
\]
\item We denote by $\|T\|_{\mathcal{X}\rightarrow \mathcal{Y}}$ the smallest constant $C>0$ such that
$$
    \|Tf\|_{\mathcal{Y}}\leq C\|f\|_{\mathcal{X}}
$$
for all $f \in \mathcal{X}$.
\end{itemize}
We note that if $\vec{w}\in A_{\vec{p}}$, then $w^p\in A_{mp}\subseteq A_{\text{FW}}$ and
\[
[w^p]_{\text{FW}}\lesssim [w^p]_{A_{mp}}\leq[\vec{w}]^p_{\vec{p}}. 
\]

\subsection{Dyadic analysis}
We call $\mc{D}$ a dyadic grid if there exists $\alpha\in\{0,\tfrac{1}{3},\tfrac{2}{3}\}^d$ for which $\mc{D}=\mc{D}^\alpha$, where
\[
\mc{D}^\alpha:=\{2^{-j}\big([0,1)^d+\alpha+k\big):j \in \Z, \,\, k\in\Z^d\}.
\]
The $3^d$-lattice theorem states that for each cube $Q$, there exists $\alpha\in\{0,\tfrac{1}{3},\tfrac{2}{3}\}^d$ and $\widetilde{Q}\in\mc{D}^\alpha$ such that $Q\subseteq\widetilde{Q}$ and $|\widetilde{Q}|\leq 6^d|Q|$, see \cite{LN15}.

For $\eta\in(0,1)$, a collection of cubes $\mc{S}$ is called $\eta$-sparse if for each $Q \in \mathcal{S}$ there exists $E_Q\subseteq Q$ such that $|E_Q|\geq \eta |Q|$ and $\{E_Q\}_{Q \in \mathcal{S}}$ is a disjoint collection. If $\eta=\tfrac{1}{2}$, then we simply say that $\mc{S}$ is sparse.
If $\mc{S}$ is sparse, then the $3^d$-lattice theorem implies that there exist $\tfrac{1}{2\cdot 6^d}$-sparse collections $\mc{S}^\alpha\subseteq\mc{D}^\alpha$ such that
\[
A_{\mc{S}}\vec{f}(x)\lesssim\sum_{\alpha\in\{0,\tfrac{1}{3},\tfrac{2}{3}\}^d}A_{\mc{S}^\alpha}\vec{f}(x).
\]

For a dyadic grid $\mc{D}$, we define $A_{\text{FW}}(\mc{D})$ in the same way as $A_{\text{FW}}$, but with the supremum taken over $\mc{D}$ rather than over all cubes. We will need the following sharp reverse H\"older inequality for weights satisfying the Fujii-Wilson condition.
\begin{theorem}[\cite{HP13}]\label{thm:sharprh}
Let $\mc{D}$ be a dyadic grid, $w\in A_{\text{FW}}(\mc{D})$, and $Q\in\mc{D}$. If $r\in(1,\infty)$ satisfies $r'\geq 2^{d+1}[w]_{\text{FW}}$, then
\[
\langle w\rangle_{r,Q}\leq 2\langle w\rangle_{1,Q}.
\]
\end{theorem}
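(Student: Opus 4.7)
Plan. The reverse H\"older inequality is a self-improvement of the integrability of an $A_\infty$-type weight; in its sharp form it is due to Hyt\"onen--P\'erez \cite{HP13}. My approach would be a dyadic Calder\'on--Zygmund stopping argument inside $Q$, driven by the weak-type control on the local dyadic maximal function that follows from the Fujii-Wilson hypothesis.

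\textbf{Step 1 (Weak-type control).} Set $\lambda_0:=\langle w\rangle_{1,Q}$. For $\lambda>\lambda_0$, let $\{Q_j^\lambda\}_j\subseteq\mc{D}(Q)$ be the maximal dyadic subcubes on which $\langle w\rangle_{1,Q_j^\lambda}>\lambda$; by maximality, $\lambda<\langle w\rangle_{1,Q_j^\lambda}\leq 2^d\lambda$. Writing $\Omega_\lambda:=\bigcup_jQ_j^\lambda$, Chebyshev together with the defining inequality of $A_{\text{FW}}(\mc{D})$ gives
\[
|\Omega_\lambda|\leq \frac{1}{\lambda}\int_Q M(w\ind_Q)\,dx\leq \frac{[w]_{\text{FW}}\,w(Q)}{\lambda}.
\]

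\textbf{Step 2 (Contraction of level sets).} Applying Step~1 locally inside each $Q_j^\lambda$ and summing in $j$, one obtains the geometric contraction
\[
|\Omega_{K\lambda}|\leq \tfrac{1}{2}|\Omega_\lambda|,\qquad K:=2^{d+1}[w]_{\text{FW}},
\]
which iterates to $|\Omega_{K^n\lambda_0}|\leq 2^{-n}|Q|$. The collection $\{Q_j^{K^n\lambda_0}\}_{n,j}$ then forms a sparse stopping family with uniformly controlled doubling of averages.

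\textbf{Step 3 (Closing the estimate).} I would then bound $\int_Q w^r\,dx$ by combining the geometric decay from Step~2 with the pointwise dominance $w\leq M^{\mc{D}}_Q(w\ind_Q)$ a.e., decomposing $Q=(Q\setminus\Omega_{\lambda_0})\cup\bigsqcup_{n\geq 0}(\Omega_{K^n\lambda_0}\setminus\Omega_{K^{n+1}\lambda_0})$ and estimating each annular contribution using the CZ overshoot $\langle w\rangle_{1,Q_j^\lambda}\leq 2^d\lambda$. A careful optimization of the stopping constant against the exponent $r$ produces the threshold relation $r'\geq K$, while tracking the constants through the telescoping geometric sum yields the clean factor of $2$ on the right-hand side.

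\textbf{Main obstacle.} The delicate point is extracting the sharp linear-in-$[w]_{\text{FW}}$ improvement $r-1\sim 1/[w]_{\text{FW}}$. An unrefined slicing along $\{\Omega_{K^n\lambda_0}\}$ based solely on Lebesgue measure and the pointwise bound $w\leq M^{\mc{D}}_Q(w\ind_Q)$ will at best converge for $r<1$; to break into the regime $r>1$ one has to exploit the Fujii-Wilson condition as an $L^1$ bound for the maximal function (not merely a weak-type $(1,1)$ bound), applied self-similarly at each stopping level, so that contributions are weighed against $w\,dx$ rather than against Lebesgue measure. This refined bookkeeping, together with a careful tracking of the constant $2^d$ in the CZ overshoot across generations, is what allows the argument of \cite{HP13} to close and deliver the precise threshold $r'\geq 2^{d+1}[w]_{\text{FW}}$ with the factor $2$ in the final inequality. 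Finiteness of the iterated quantities is a minor technical point that can be handled by an a~priori truncation of $w$ and a passage to the limit.
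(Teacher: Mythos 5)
Your Steps 1 and 2 are correct and well-executed: the weak-type bound $|\Omega_\lambda|\leq[w]_{\text{FW}}w(Q)/\lambda$, the localization $M^{\mc D}_Q(w\ind_Q)=M^{\mc D}_{Q_j^\lambda}(w\ind_{Q_j^\lambda})$ on each stopping cube, and the resulting contraction $|\Omega_{K\lambda}|\leq\tfrac12|\Omega_\lambda|$ for $K=2^{d+1}[w]_{\text{FW}}$ are all standard and accurately reproduced. You have also correctly diagnosed why the naive version of Step 3 fails: slicing $\int_Q w^r$ along $\{\Omega_{K^n\lambda_0}\}$ in Lebesgue measure and using only the pointwise bound $w\leq M^{\mc D}_Q(w\ind_Q)$ produces $\sum_n K^{nr}2^{-n}$, which already diverges for $r$ slightly above $\log_K 2<1$.

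However, Step 3 is never actually carried out, and the mechanism you gesture at does not close the argument in the form you describe. Even if you reweigh the annuli against $w\,dx$ rather than Lebesgue measure, combining your Step 2 decay $|\Omega_{K^n\lambda_0}|\leq 2^{-n}|Q|$ with the CZ overshoot $w(\Omega_{K^n\lambda_0})\leq 2^d K^n\lambda_0|\Omega_{K^n\lambda_0}|$ gives $w(\Omega_{K^n\lambda_0})\lesssim (K/2)^n w(Q)$, which still grows; the resulting series for $\int_Q(M^{\mc D}_Qw)^\epsilon w\,dx$ diverges since $K>2$. The precise engine of Hyt\"onen--P\'erez is a self-referential absorption identity, not a geometric iteration: normalizing $\langle w\rangle_{1,Q}=1$ and writing $\int_Q(Mw)^\epsilon w\,dx=\epsilon\int_0^\infty\lambda^{\epsilon-1}w(\Omega_\lambda)\,d\lambda$, one uses the CZ overshoot to bound the tail by $\tfrac{2^d\epsilon}{1+\epsilon}\int_Q(Mw)^{1+\epsilon}\,dx$, and then the key inequality
\[
\int_Q(M^{\mc D}_Qw)^{1+\epsilon}\,dx \;=\;\epsilon\int_0^\infty\lambda^{\epsilon-1}\Big(\int_{\Omega_\lambda}M^{\mc D}_Qw\,dx\Big)\,d\lambda\;\leq\;[w]_{\text{FW}}\int_Q(M^{\mc D}_Qw)^\epsilon w\,dx,
\]
obtained by applying the Fujii--Wilson condition \emph{locally to every stopping cube $Q_j^\lambda$ and at every level $\lambda$ simultaneously} (so that $\int_{\Omega_\lambda}Mw\,dx\leq[w]_{\text{FW}}w(\Omega_\lambda)$), feeds the right-hand side back into the left. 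Choosing $\epsilon=1/(2^{d+1}[w]_{\text{FW}}-1)$ makes the absorption factor exactly $\tfrac12$, which both delivers the threshold $r'\geq 2^{d+1}[w]_{\text{FW}}$ and the constant $2$. This distributional-absorption step is the heart of the proof and is absent from your proposal; your Step 2, while correct, does not by itself lead to a closing estimate in the range $r>1$, and your own ``Main obstacle'' paragraph acknowledges this without supplying the missing ingredient.
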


\subsection{Kolmogorov's lemma}
We frequently appeal to Kolmogorov's lemma, which, for a $\sigma$-finite measure space $(\Omega,\mu)$ and $p \in(0,\infty)$, states that $f\in L^{p,\infty}(\Omega,\mu)$ if and only if there exists $C>0$ such that for all $E\subseteq\Omega$ with $0<\mu(E)<\infty$ and all $0<\theta<r$, one has
\begin{equation}\label{eq:kolmogorov}
\int_E\!|f|^\theta\,\mathrm{d}\mu\leq\tfrac{p}{p-\theta}C^\theta\mu(E)^{1-\frac{\theta}{p}},
\end{equation}
in which case the optimal constant $C>0$ satisfies
\[
C\leq \|f\|_{L^{p,\infty}(\Omega,\mu)}\leq \big(\tfrac{p}{p-\theta}\big)^{\frac{1}{\theta}}C.
\]
We also use the variant that asserts $f\in L^{p,\infty}(\Omega,\mu)$ if and only if there exists $C>0$ such that for each $E\subseteq\Omega$ with $0<\mu(E)<\infty$ there exists a $E'\subseteq E$ with $\mu(E')\geq\tfrac{1}{2}\mu(E)$ and
\begin{equation}\label{eq:kolmogorov2}
\int_{E'}\!|f|\,\mathrm{d}\mu\leq C\mu(E)^{1-\frac{1}{p}},
\end{equation}
in which case the optimal constant $C$ satisfies
\[
2^{-\frac{1}{p}}C\leq \|f\|_{L^{p,\infty}(\Omega,\mu)}\leq 2C,
\]
see \cite[Exercise~1.4.14]{Gr14a}.

\section{Proof of Theorem~\ref{thm:E} in the case $\vec{p}\in(1,\infty)^m$, $p>1$}\label{TheoremAPart1}
\subsection{Local testing for sparse form domination}
The proof of Theorem~\ref{thm:E} in the case $p>1$ uses the following local testing condition.
\begin{theorem}\label{thm:D}
Let $T$ be an $m$-sublinear operator satisfying sparse form domination, let $\vec{p}\in (1,\infty)^m$, and let $p>1$ satisfy $\tfrac{1}{p}=\sum_{j=1}^m\tfrac{1}{p_j}$. If $\vec{w}$ and $\omega$ are weights, then 
\begin{equation}\label{eq:thmdtesting}
\|T\|_{L^{\vec{p}}_{\vec{w}}(\R^d)\to L^{p,\infty}_\omega(\R^d)}\lesssim\sup_{\mc{S}}\sup_{Q_0\in\mc{S}}\sup_{\substack{\|f_j\|_{L^{p_j}_{w_j}(Q_0)}=1\\ j\in\{1,\ldots,m\}}}v(Q_0)^{-\frac{1}{p'}}\int_{Q_0}\!A_{\mc{S}(Q_0)}(\vec{f}) v\,\mathrm{d}x,
\end{equation}
where $v:=w^p$ and the first supremum is taken over all finite $\tfrac{1}{2\cdot 6^d}$-sparse collections contained in some dyadic grid. 

\end{theorem}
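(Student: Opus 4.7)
The plan is to reduce the operator norm bound to a pointwise sparse sum estimate and then handle it by a dyadic density decomposition. Let $T_0$ denote the supremum on the right-hand side of~\eqref{eq:thmdtesting}. By the sparse form domination hypothesis, the $3^d$-lattice theorem, and Kolmogorov's lemma~\eqref{eq:kolmogorov} (applied with $\theta=1$, admissible since $p>1$), the estimate~\eqref{eq:thmdtesting} reduces to showing that for every finite $\tfrac{1}{2\cdot 6^d}$-sparse collection $\mc{S}$ contained in a dyadic grid, every $\vec{f}\in L^{\vec{p}}_{\vec{w}}(\R^d)$, and every measurable $E\subseteq\R^d$ with $0<v(E)<\infty$,
\begin{equation}\label{eq:planreduction}
\sum_{Q\in\mc{S}}\prod_{j=1}^m\langle f_j\rangle_{1,Q}\,v(Q\cap E)\lesssim T_0\,\|\vec{f}\|_{L^{\vec{p}}_{\vec{w}}(\R^d)}\,v(E)^{\frac{1}{p'}}.
\end{equation}
The Kolmogorov step implicitly identifies the target $L^{p,\infty}_\omega(\R^d)$ with $L^{p,\infty}(\R^d,v)$, reading $\omega$ as the factor for which $\omega^p=v$.

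The core of the proof is a dyadic decomposition of $\mc{S}$ by the density $v(Q\cap E)/v(Q)\in(0,1]$. For each integer $k\leq 0$, set
\[
\mc{F}_k:=\bigl\{Q\in\mc{S}:2^k\leq v(Q\cap E)/v(Q)<2^{k+1}\bigr\},
\]
so that $\mc{S}=\bigsqcup_{k\leq 0}\mc{F}_k$ after discarding cubes with $v(Q\cap E)=0$ (which contribute nothing to~\eqref{eq:planreduction}). Since $\mc{F}_k\subseteq\mc{S}$ is itself sparse, the testing condition applied on each cube $R$ maximal in $\mc{F}_k$ (to $\vec{f}\ind_R$, by homogeneity) gives
\[
\sum_{Q\in\mc{F}_k(R)}\prod_{j=1}^m\langle f_j\rangle_{1,Q}\,v(Q)\leq T_0\prod_{j=1}^m\|f_j\|_{L^{p_j}_{w_j}(R)}\,v(R)^{\frac{1}{p'}}.
\]

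To conclude, I would sum this estimate over the pairwise disjoint maximal cubes of $\mc{F}_k$ and apply H\"older's inequality with exponents $(p_1,\ldots,p_m,p')$, which sum to $1$ since $\tfrac{1}{p}=\sum_j\tfrac{1}{p_j}$ and $p>1$. The disjointness yields $\sum_R\|f_j\|_{L^{p_j}_{w_j}(R)}^{p_j}\leq\|f_j\|_{L^{p_j}_{w_j}(\R^d)}^{p_j}$, while the density bound gives $\sum_R v(R)\leq 2^{-k}\sum_R v(R\cap E)\leq 2^{-k}v(E)$. Combining these with the definition of $\mc{F}_k$,
\[
\sum_{Q\in\mc{F}_k}\prod_{j=1}^m\langle f_j\rangle_{1,Q}\,v(Q\cap E)\leq 2^{k+1}T_0\|\vec{f}\|_{L^{\vec{p}}_{\vec{w}}(\R^d)}\,(2^{-k}v(E))^{\frac{1}{p'}}=2T_0\cdot 2^{k/p}\|\vec{f}\|_{L^{\vec{p}}_{\vec{w}}(\R^d)}\,v(E)^{\frac{1}{p'}},
\]
and summing the convergent geometric series $\sum_{k\leq 0}2^{k/p}$ completes the proof of~\eqref{eq:planreduction}.

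The hard part is identifying the right decomposition: grouping cubes by dyadic levels of $v(Q\cap E)/v(Q)$ provides exactly the bridge from the strong-type factor $v(Q)$ in the testing condition to the weak-type factor $v(E)^{1/p'}$ in the target bound. Once this choice is made, the remaining steps (local testing on each $\mc{F}_k$, H\"older across the maximal cubes, and summing the geometric series in $k$) follow routinely.
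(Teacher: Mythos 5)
Your proof is correct and takes essentially the same approach as the paper: both decompose the sparse collection by the density level $v(Q\cap E)/v(Q)$, apply the testing condition on the maximal cubes at each level, and use H\"older with exponents $(p_1,\dots,p_m,p')$ together with disjointness. The only difference is cosmetic: you use a discrete dyadic decomposition and sum a geometric series in $k$, whereas the paper parametrizes the same decomposition continuously via the layer-cake formula and bounds the resulting integral by the $L^{p',1}(\R^d,v)$ norm of $M^{\mc{D},v}\ind_E$.
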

\begin{remark}
One can actually show that 
\[
\|A_{\mc{S}}\|_{L^{\vec{p}}_{\vec{w}}(\R^d)\to L^{p,\infty}_\omega(\R^d)}\eqsim \sup_{Q_0\in\mc{S}}\sup_{\substack{\|f_j\|_{L^{p_j}_{w_j}(Q_0)}=1\\ j\in\{1,\ldots,m\}}}v(Q_0)^{-(\frac{1}{q}-\frac{1}{p})}\Big(\int_{Q_0}\!(A_{\mc{S}(Q_0)}\vec{f})^q v\,\mathrm{d}x\Big)^{\frac{1}{q}}
\]
for all $1\leq q\leq p<\infty$. Additionally, if $k\in\{1,\ldots,m\}$, then
\[
\int_{Q_0}\!(A_{\mc{S}(Q_0)}\vec{f}) v\,\mathrm{d}x=\int_{Q_0}\!|f_k|A_{\mc{S}(Q_0)}(f_1,\ldots,f_{k-1},v,f_{k+1},\ldots,f_m)\,\mathrm{d}x,
\]
so the term inside the supremum on the right-hand side of \eqref{eq:thmdtesting} is equal to
\[
\sup_{\substack{\|f_j\|_{L^{p_j}_{w_j}(Q_0)}=1\\ j\in\{1,\ldots,m\}\setminus\{k\}}}v(Q_0)^{-\frac{1}{p'}}\Big\|A_{\mc{S}(Q_0)}(f_1,\ldots,f_{k-1},v,f_{k+1},\ldots,f_m)\Big\|_{L_{w_k^{-1}}^{p_k'}(Q_0)}.
\]
When $m=1$, this gives the local testing condition from \cite{LSU09}:
\[
\|A_{\mc{S}}\|_{L^p_w(\R^d)\to L^{p,\infty}_w(\R^d)}\eqsim\sup_{Q_0\in\mc{S}}v(Q_0)^{-\frac{1}{p'}}\Big\|\sum_{Q\in\mc{S}(Q_0)}\langle v\rangle_{1,Q}\ind_Q\Big\|_{L_{w^{-1}}^{p'}(Q_0)}.
\]
\end{remark}

\begin{proof}[Proof of Theorem~\ref{thm:D}]
Let $E\subseteq\R^d$ with $0<v(E)<\infty$. Then there is a sparse collection $\mc{S}$ such that
\begin{equation}\label{eq:thmd1}
\int_E\!|T(\vec{f})|v\,\mathrm{d}x\lesssim \sum_{Q\in\mc{S}}\Big(\prod_{j=1}^m\langle f_j\rangle_{1,Q}\Big)\langle v\ind_E\rangle_{1,Q}|Q|.
\end{equation}
By the $3^d$-lattice theorem, we may assume that $\mc{S}$ is $\tfrac{1}{2\cdot 6^d}$-sparse and contained in a dyadic grid $\mc{D}$, and by monotone convergence we may assume that $\mc{S}$ is finite.

Fix $\lambda>0$, let
\[
\mc{S}_\lambda:=\{Q\in\mc{S}:\langle\ind_E\rangle^v_{1,Q}>\lambda\},
\]
and let $\mc{S}^\ast_\lambda$ denote the maximal cubes in $\mc{S}_\lambda$. Note that 
\[
\sum_{Q_0\in\mc{S}_\lambda}\ind_{Q_0}=\ind_{\{M^{\mc{S}_\lambda,v}(\ind_E)>\lambda\}}\leq\ind_{\{M^{\mc{D},v}(\ind_E)>\lambda\}}.
\]
Hence, denoting the right-hand side of \eqref{eq:thmdtesting} by $\mc{M}$, we have
\begin{align*}
\sum_{Q\in\mc{S}_\lambda}\Big(\prod_{j=1}^m\langle f_j\rangle_{1,Q}\Big)v(Q)
&=\sum_{Q_0\in\mc{S}^\ast_\lambda}\sum_{Q\in\mc{S}(Q_0)}\Big(\prod_{j=1}^m\langle f_j\rangle_{1,Q}\Big)v(Q)\\
&\leq\mc{M}\sum_{Q_0\in\mc{S}^\ast_\lambda}\prod_{j=1}^m\Big(\int_{Q_0}\!|f_j|^{p_j}w_j^{p_j}\,\mathrm{d}x\Big)^{\frac{1}{p_j}}v(Q_0)^{\frac{1}{p'}}\\
&\leq\mc{M}\prod_{j=1}^m\Big(\sum_{Q_0\in\mc{S}^\ast_\lambda} \int_{Q_0}\!|f_j|^{p_j}w_j^{p_j}\,\mathrm{d}x\Big)^{\frac{1}{p_j}}\Big(\sum_{Q_0\in\mc{S}^\ast_\lambda}v(Q_0)\Big)^{\frac{1}{p'}}\\
&\leq\mc{M}\Big(\prod_{j=1}^m\|f_j\|_{L^{p_j}_{w_j}(\R^d)}\Big)v\big(\{M^{\mc{D},v}(\ind_E)>\lambda\}\big)^{\frac{1}{p'}},
\end{align*}
where in the second to last step we used H\"older's inequality. Thus,
\begin{align*}
\sum_{Q\in\mc{S}}\Big(\prod_{j=1}^m\langle f_j\rangle_{1,Q}\Big)\langle v\ind_E\rangle_{1,Q}|Q|
&=\sum_{Q\in\mc{S}}\Big(\prod_{j=1}^m\langle f_j\rangle_{1,Q}\Big)\Big(\int_0^{\langle\ind_E\rangle^v_{1,Q}}\!\,\mathrm{d}\lambda\Big)v(Q)\\
&=\int_0^\infty \sum_{Q\in\mc{S}_\lambda}\Big(\prod_{j=1}^m\langle f_j\rangle_{1,Q}\Big)v(Q)\,\mathrm{d}\lambda\\
&\leq\mc{M}\Big(\prod_{j=1}^m\|f_j\|_{L^{p_j}_{w_j}(\R^d)}\Big)\|M^{\mc{D},v}(\ind_E)\|_{L^{p',1}(\R^d,v)}\\
&\lesssim\mc{M}\Big(\prod_{j=1}^m\|f_j\|_{L^{p_j}_{w_j}(\R^d)}\Big)v(E)^{\frac{1}{p'}}.
\end{align*}
Combining this with \eqref{eq:thmd1}, the result follows from Kolmogorov's lemma.
\end{proof}

\subsection{Proof of Theorem~\ref{thm:E} in the case $ \vec{\bf{p}}\bf\in(1,\infty)^m$, $\bf p>1$}
We need several lemmata.
We first need the following application of Kolmogorov's lemma:
\begin{lemma}\label{lem:multilineartesting3}
If $\mc{S}$ is a sparse collection in a dyadic grid $\mc{D}$ and $\alpha_1,\ldots,\alpha_m\in[0,1)$ with $\sum_{j=1}^m\alpha_j<1$, then 
\[
\sum_{\substack{Q'\in\mc{S}\\ Q'\subseteq Q}}\Big(\prod_{j=1}^m\langle g_j\rangle_{1,Q'}^{\alpha_j}\Big)|Q'|\lesssim\Big(\prod_{j=1}^m\langle g_j\rangle_{1,Q}^{\alpha_j}\Big)|Q|
\]
for all $g_1,\ldots,g_m\in L^1_{\text{loc}}(\R^d)$ and all $Q\in\mc{D}$.
\end{lemma}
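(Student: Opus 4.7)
The plan is to linearize the sum by sparseness, pass to a dyadic maximal function, and then apply Kolmogorov's lemma distributed over the $m$ factors via Hölder's inequality.

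First I would exploit that $\mc{S}$ is sparse: choose disjoint sets $E_{Q'}\subseteq Q'$ with $|E_{Q'}|\geq \tfrac{1}{2}|Q'|$ for each $Q'\in\mc{S}$. For $x\in E_{Q'}\subseteq Q'\subseteq Q$, each average satisfies $\langle g_j\rangle_{1,Q'}\leq M^{\mc{D}(Q)}g_j(x)$, where $M^{\mc{D}(Q)}$ denotes the dyadic maximal operator restricted to the subcubes of $Q$ in $\mc{D}$. Since the $E_{Q'}$ are disjoint, I can therefore bound
\[
\sum_{\substack{Q'\in\mc{S}\\ Q'\subseteq Q}}\Big(\prod_{j=1}^m\langle g_j\rangle_{1,Q'}^{\alpha_j}\Big)|Q'|\leq 2\sum_{\substack{Q'\in\mc{S}\\ Q'\subseteq Q}}\int_{E_{Q'}}\!\prod_{j=1}^m (M^{\mc{D}(Q)}g_j)^{\alpha_j}\,\mathrm{d}x\leq 2\int_{Q}\!\prod_{j=1}^m (M^{\mc{D}(Q)}g_j)^{\alpha_j}\,\mathrm{d}x.
\]

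Next I would split the product using Hölder's inequality. Since $\sum_j\alpha_j<1$, I can pick exponents $r_1,\ldots,r_m\in(1,\infty)$ with $\sum_{j=1}^m\tfrac{1}{r_j}=1$ and $\alpha_j r_j<1$ for every $j$ (concretely, one may take $\tfrac{1}{r_j}=\alpha_j+\tfrac{1-\sum_k\alpha_k}{m}$). By Hölder,
\[
\int_{Q}\!\prod_{j=1}^m (M^{\mc{D}(Q)}g_j)^{\alpha_j}\,\mathrm{d}x\leq \prod_{j=1}^m \Big(\int_Q\!(M^{\mc{D}(Q)}g_j)^{\alpha_j r_j}\,\mathrm{d}x\Big)^{\frac{1}{r_j}}.
\]

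Now I would apply Kolmogorov's lemma \eqref{eq:kolmogorov} to each factor. The dyadic maximal operator restricted to $\mc{D}(Q)$ is of weak type $(1,1)$ on $Q$ with constant one, so $\|M^{\mc{D}(Q)}g_j\|_{L^{1,\infty}(Q)}\leq\|g_j\ind_Q\|_{L^1(\R^d)}$. Since $\alpha_j r_j<1$, Kolmogorov yields
\[
\int_Q\!(M^{\mc{D}(Q)}g_j)^{\alpha_j r_j}\,\mathrm{d}x\leq \tfrac{1}{1-\alpha_j r_j}\|g_j\ind_Q\|_{L^1(\R^d)}^{\alpha_j r_j}|Q|^{1-\alpha_j r_j}=\tfrac{1}{1-\alpha_j r_j}\langle g_j\rangle_{1,Q}^{\alpha_j r_j}|Q|.
\]
Raising to the power $1/r_j$ and multiplying the resulting $m$ inequalities produces $\prod_j\langle g_j\rangle_{1,Q}^{\alpha_j}\cdot|Q|^{\sum 1/r_j}=\prod_j\langle g_j\rangle_{1,Q}^{\alpha_j}\cdot|Q|$, which is precisely the claim. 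The implicit constant depends only on $d$, $m$, and the $\alpha_j$, which is consistent with the $\lesssim$ convention from Section~\ref{Preliminaries}. No step is a real obstacle; the only thing one has to be a bit careful about is verifying that admissible Hölder exponents $r_j$ exist, and this is exactly where the hypothesis $\sum_j\alpha_j<1$ is used.
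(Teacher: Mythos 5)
Your proof is correct and follows essentially the same route as the paper: bound the sum by $\int_Q\prod_j(M^{\mc{D}(Q)}g_j)^{\alpha_j}\,\mathrm{d}x$ via sparseness, split by H\"older with exponents chosen so each piece has sub-$L^1$ growth, then apply Kolmogorov to each factor. Your explicit choice $\tfrac{1}{r_j}=\alpha_j+\tfrac{1-\sum_k\alpha_k}{m}$ is exactly the paper's $\theta_j$, so the two arguments are identical in substance.
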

\begin{proof}
Pick $\theta_1,\ldots,\theta_m\in[0,1]$ with $\theta_j>\alpha_j$ and $\sum_{j=1}^m\theta_j=1$. 
Using the sparseness condition, H\"older's inequality, and Kolmogorov's lemma \eqref{eq:kolmogorov}, we have
\begin{align*}
\sum_{\substack{Q'\in\mc{S}\\ Q'\subseteq Q}}\Big(\prod_{j=1}^m\langle g_j\rangle_{1,Q'}^{\alpha_j}\Big)|Q'|&\lesssim\int_Q\!\prod_{j=1}^m (M^{\mc{D}(Q)}g_j)^{\alpha_j}\,\mathrm{d}x
\leq\prod_{j=1}^m\Big(\int_Q\!(M^{\mc{D}(Q)}g_j)^{\frac{\alpha_j}{\theta_j}}\,\mathrm{d}x\Big)^{\theta_j}\\
&\leq\prod_{j=1}^m\big(\tfrac{1}{1-\frac{\alpha_j}{\theta_j}}\big)^{\theta_j}\|g_j\|_{L^1(Q)}^{\alpha_j}|Q|^{\theta_j-\alpha_j}
\eqsim\Big(\prod_{j=1}^m\langle g_j\rangle_{1,Q}^{\alpha_j}\Big)|Q|.
\end{align*}
This proves the result.
\end{proof}

The next lemma uses \cite[Proposition~2.2]{COV04} which gives 
\begin{equation}\label{eq:covlemma}
\Big\|\sum_{Q\in\mc{F}}a_Q\ind_Q\Big\|_{L^q(\R^d,v)}\eqsim\Big(\sum_{Q\in\mc{F}}\Big(\frac{1}{v(Q)}\sum_{\substack{Q'\in\mc{F}\\ Q'\subseteq Q}}a_{Q'}v(Q)\Big)^{q-1}a_Qv(Q)\Big)^{\frac{1}{q}}.
\end{equation}
for all $q\in[1,\infty)$, weights $v$, collections $\mc{F}$ in a dyadic grid, and $\{a_Q\}_{Q\in\mc{F}}\subseteq [0,\infty)$. 

\begin{lemma}\label{lem:thme}
If $\mc{S}$ is a sparse collection in a dyadic grid $\mc{D}$, $\vec{p}\in(1,\infty)^m$ with $p>1$, and $(\vec{w},\omega)\in A_{\vec{p}}$, then 
\[
\Big\|\sum_{Q\in\mc{S}}\prod_{j=0}^{m-1}\langle v_j\rangle_{1,Q}\ind_Q\Big\|_{L^{p_m'}(\R^d,v_m)}\lesssim [\vec{w},\omega]_{\vec{p}}\Big(\sum_{Q\in\mc{S}}\Big(\prod_{j=0}^{m-1}\langle v_j\rangle_{1,Q}^{\frac{p_m'}{p_j}}\Big)|Q|\Big)^{\frac{1}{p_m'}},
\]
where $p_0:=p'$, $v_0:=\omega^p$, and $v_j:=w_j^{-p_j'}$.
\end{lemma}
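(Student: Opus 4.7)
The plan is to first rewrite the $L^{p_m'}(\R^d, v_m)$-norm using the Chen--Oberlin--Verbitsky identity \eqref{eq:covlemma}. Taking $a_Q := \prod_{j=0}^{m-1}\langle v_j\rangle_{1,Q}$, $v := v_m$, and $q := p_m'$, the target reduces (after summation) to a per-cube estimate of the form
\[
\Big(\tfrac{1}{v_m(Q)}\sum_{\substack{Q'\in\mc{S}\\ Q'\subseteq Q}}a_{Q'}v_m(Q')\Big)^{p_m'-1}a_Q v_m(Q) \;\lesssim\; [\vec{w},\omega]_{\vec{p}}^{p_m'}\prod_{j=0}^{m-1}\langle v_j\rangle_{1,Q}^{p_m'/p_j}|Q|,
\]
uniformly over $Q \in \mc{S}$. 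Set $\gamma_0 := 1/p$ and $\gamma_j := 1/p_j'$ for $j = 1,\ldots,m$, so that the $A_{\vec{p}}$ condition reads $\prod_{j=0}^m \langle v_j\rangle_{1,Q}^{\gamma_j} \leq [\vec{w},\omega]_{\vec{p}}$ at every $Q$, and note $\sum_{j=0}^m \gamma_j = m$.

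Choose an auxiliary parameter $s$ with $1 < s \leq \min(p, p_1', \ldots, p_m', p_m)$, a non-empty interval since each of these quantities exceeds $1$. Raising $A_{\vec{p}}$ to the $s$-th power at $Q'$ yields $\prod_{j=0}^m \langle v_j\rangle_{1,Q'} \leq [\vec{w},\omega]_{\vec{p}}^s \prod_{j=0}^m \langle v_j\rangle_{1,Q'}^{1-s\gamma_j}$. Since $a_{Q'}v_m(Q') = \prod_{j=0}^m\langle v_j\rangle_{1,Q'}|Q'|$ and the residual exponents $\alpha_j := 1 - s\gamma_j$ lie in $[0,1)$ with $\sum_{j=0}^m \alpha_j = (m+1) - sm < 1$, Lemma~\ref{lem:multilineartesting3} applies and gives
\[
\sum_{\substack{Q'\in\mc{S}\\ Q'\subseteq Q}}a_{Q'}v_m(Q') \;\lesssim\; [\vec{w},\omega]_{\vec{p}}^s \prod_{j=0}^m \langle v_j\rangle_{1,Q}^{1-s\gamma_j}|Q|.
\]
Dividing by $v_m(Q) = \langle v_m\rangle_{1,Q}|Q|$, raising to the power $p_m' - 1$, and multiplying by $a_Q v_m(Q)$ then leaves a spurious factor $\langle v_m\rangle_{1,Q}^{1 - s/p_m}$ with non-negative exponent (because $s \leq p_m$). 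A second application of $A_{\vec{p}}$ at $Q$ bounds this factor by $[\vec{w},\omega]_{\vec{p}}^{p_m'(1-s/p_m)} \prod_{j=0}^{m-1} \langle v_j\rangle_{1,Q}^{-(1-s/p_m)\gamma_j/\gamma_m}$. Using the identity $p_m'/p_m = p_m' - 1$, the two $[\vec{w},\omega]_{\vec{p}}$ exponents add to $s(p_m'-1) + p_m'(1 - s/p_m) = p_m'$, while the $\langle v_j\rangle_{1,Q}$ exponents collapse to $p_m'(1-\gamma_j) = p_m'/p_j$, matching the right-hand side. Summing over $Q \in \mc{S}$ and taking the $p_m'$-th root via \eqref{eq:covlemma} finishes the proof.

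The main obstacle is the algebraic bookkeeping that verifies the final $[\vec{w},\omega]_{\vec{p}}$ exponent is exactly $p_m'$ independently of the auxiliary parameter $s$ and that the $\langle v_j\rangle$ exponents simplify to $p_m'/p_j$. Equally important is the two-step structure: a direct single application of $A_{\vec{p}}$ to the power $p_m'$ would demand $p_m' \leq \min(p,p_1',\ldots,p_{m-1}')$ in order for Lemma~\ref{lem:multilineartesting3} to apply, which need not hold; splitting the $A_{\vec{p}}$ exponent via the intermediate $s\in(1,\min(p,p_j',p_m)]$ and using a second application at $Q$ to absorb the leftover $\langle v_m\rangle_{1,Q}$-factor circumvents this obstruction.
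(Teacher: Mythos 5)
Your argument is correct and follows essentially the same route as the paper: rewrite the left-hand side via the Chen--Oberlin--Verbitsky discretization \eqref{eq:covlemma}, bound the inner carré du champ sum $\sum_{Q'\subseteq Q} a_{Q'}v_m(Q')$ by a single application of $A_{\vec p}$ (raised to a power large enough to satisfy the hypotheses of Lemma~\ref{lem:multilineartesting3}), and then apply $A_{\vec p}$ a second time at the outer cube $Q$ to absorb the leftover $\langle v_m\rangle_{1,Q}$-factor. The paper fixes $\gamma := \min(p_0',\ldots,p_m')$ where you keep a free parameter $s\in(1,\min(p,p_1',\ldots,p_m',p_m)]$, but since $p_0'=p\le p_m$ your minimum equals theirs, so the choices coincide at the optimal endpoint and the algebraic bookkeeping matches.
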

\begin{proof}
Let $\gamma:=\min\{p_0',\ldots,p_m'\}$ and observe
\[
m+1\geq\sum_{j=0}^m \frac{\gamma}{p_j'}=\gamma m,
\]
so that $1<\gamma\leq1+\frac{1}{m}$. Since $\sum_{j=0}^m 1-\frac{\gamma}{p_j'}=m+1-m\gamma\in[0,1),
$ Lemma~\ref{lem:multilineartesting3} gives
\begin{equation}\label{eq:C21}
\begin{split}
\frac{1}{v_m(Q)}\sum_{\substack{Q'\in\mc{S}\\ Q'\subseteq Q}}\Big(\prod_{j=0}^m\langle v_j\rangle_{1,Q'}\Big)|Q'|
&\leq[\vec{w},\omega]^\gamma_{\vec{p}}\frac{1}{v_m(Q)}\sum_{\substack{Q'\in\mc{S}\\ Q'\subseteq Q}}\Big(\prod_{j=0}^m\langle v_j\rangle^{1-\frac{\gamma}{p_j'}}_{1,Q'}\Big)|Q'|\\
&\lesssim[\vec{w},\omega]^\gamma_{\vec{p}}\frac{1}{v_m(Q)}\Big(\prod_{j=0}^m\langle v_j\rangle^{1-\frac{\gamma}{p_j'}}_{1,Q}\Big)|Q|.
\end{split}
\end{equation}
Thus, by \eqref{eq:covlemma}
\begin{align*}
\Big\|\sum_{Q\in\mc{S}}\prod_{j=0}^{m-1}\langle v_j\rangle_{1,Q}\ind_Q\Big\|^{p_m'}_{L^{p_m'}(\R^d,v_m)}&\lesssim[\vec{w},\omega]^{\gamma\frac{p_m'}{p_m}}_{\vec{p}}\sum_{Q\in\mc{S}}\Big(\prod_{j=0}^{m-1}\langle v_j\rangle^{1+\frac{p_m'}{p_m}(1-\frac{\gamma}{p_j'})}_{1,Q}\Big)\langle v_m\rangle_{1,Q}^{1-\frac{\gamma}{p_m}}|Q|\\
&\leq[\vec{w},\omega]^{p_m'}_{\vec{p}}\sum_{Q\in\mc{S}}\Big(\prod_{\substack{j=0}}^{m-1}\langle v_j\rangle^{\frac{p_m'}{p_j}}_{1,Q}\Big)|Q|.
\end{align*}
This proves the assertion.
\end{proof}

\begin{proof}[Proof of Theorem~\ref{thm:E} in the case $\vec{p}\in(1,\infty)^m$, $p>1$]
Let $\mc{S}$ be a finite sparse collection in a dyadic grid $\mc{D}$ and fix $Q_0\in\mc{S}$. Let $v_j:=w_j^{-p_j'}$ and write $\lambda_{j,Q}:=\langle f_jv_j^{-1}\rangle^{v_j}_{1,Q}$ for $j\in\{1,\ldots,m\}$. For each $j\in\{1,\ldots,m\}$ and $Q\in\mc{S}(Q_0)$, let $\text{ch}_j(Q)$ denote the collection of maximal cubes $Q'\in\mc{S}(Q_0)$ satisfying $\lambda_{j,Q'}>2\lambda_{j,Q}$. Let $\mc{E}_{j,0}:=\{Q_0\}$ and recursively define
\[
\mc{E}_{j,k+1}:=\bigcup_{Q\in\mc{E}_{j,k}}\text{ch}_j(Q) \quad\text{and}\quad \mc{E}_j:=\bigcup_{k=0}^\infty\mc{E}_{j,k}.
\]
Since the sequence $\{\lambda_{j,Q}\}_{\substack{Q\in\mc{E}_j\\ Q\ni x}}$ is lacunary for each $x\in Q_0$, we have
\begin{equation}\label{eq:thme1}
\sum_{Q\in\mc{E}_j}\lambda_{j,Q}\ind_Q\leq 2 M^{\mc{E}_j,v_j}(f_jv_j^{-1}).
\end{equation}
Letting $\pi_j(Q)$ denote the smallest cube $Q'$ in $\mc{E}_j$ for which $Q\subseteq Q'$ for $Q\in\mc{S}(Q_0)$, we have
\begin{equation}\label{eq:thme2}
\lambda_{j,Q}\leq 2\lambda_{j,\pi_j(Q)}.
\end{equation}

Set $p_0:=p'$, $v_0:=w^p$, $\mu_Q:=\Big(\prod_{j=0}^m\langle v_j\rangle_{1,Q}\Big)|Q|$, write $\vec{Q}\in\mc{E}$ to mean that $\vec{Q}=(Q_1,\ldots,Q_m)$ with $Q_j\in\mc{E}_j$, and put $\pi(Q):=(\pi_1(Q),\ldots,\pi_m(Q))$. Then
\begin{equation}\label{eq:thme3}
\sum_{Q\in\mc{S}(Q_0)}\Big(\prod_{j=1}^m\langle f_j\rangle_{1,Q}\Big)v_0(Q)
=\sum_{Q\in\mc{S}(Q_0)}\Big(\prod_{j=1}^m\lambda_{j,Q}\Big)\mu_Q
=\sum_{\vec{Q}\in\mc{E}}\sum_{\substack{Q\in\mc{S}(Q_0)\\ \pi(Q)=\vec{Q}}}\Big(\prod_{j=1}^m\lambda_{j,Q}\Big)\mu_Q.
\end{equation}
If $Q\in\mc{S}(Q_0)$ satisfies $\pi(Q)=\vec{Q}$, then by the properties of the dyadic grid, $\bigcap_{j=1}^{m-1}Q_j=Q_{j_0}$ for some $j_0\in\{1,\ldots,m\}$. Moreover, as $Q\subseteq Q_{j_0}$ and $\pi_j(Q)=Q_j$, this implies that $\pi_j(Q_{j_0})=Q_j$ for all $j\in\{1,\ldots,m\}$. Thus, we have
\[
\sum_{\vec{Q}\in\mc{E}}\sum_{\substack{Q\in\mc{S}(Q_0)\\ \pi(Q)=\vec{Q}}}\Big(\prod_{j=1}^m\lambda_{j,Q}\Big)\mu_Q
\leq\sum_{j_0=1}^m\sum_{\substack{Q_j\in\mc{E}_j\\j\neq j_0}}\sum_{\substack{Q_{j_0}\in\mc{E}_{j_0}\\\pi_j(Q_{j_0})=Q_j\\j\neq j_0}}
\sum_{\substack{Q\in\mc{S}(Q_0)\\ \pi(Q)=\vec{Q}}}\Big(\prod_{j=1}^m\lambda_{j,Q}\Big)\mu_Q.
\]

By symmetry, it suffices to estimate the term with $j_0=m$. By \eqref{eq:thme2} we have
\begin{align*}
\sum_{\substack{Q_j\in\mc{E}_j\\j=1,\ldots,m-1}}&\sum_{\substack{Q_m\in\mc{E}_m\\\pi_j(Q_m)=Q_j\\j=1,\ldots,m-1}}
\sum_{\substack{Q\in\mc{S}(Q_0)\\ \pi(Q)=\vec{Q}}}\Big(\prod_{j=1}^m\lambda_{j,Q}\Big)\mu_Q\\
&\leq2^m\sum_{\substack{Q_j\in\mc{E}_j\\j=1,\ldots,m-1}}\prod_{j=1}^{m-1}\lambda_{j,Q_j}\sum_{\substack{Q_m\in\mc{E}_m\\\pi_j(Q_m)=Q_j\\j=1,\ldots,m-1}}
\lambda_{m,Q_m}\sum_{\substack{Q\in\mc{S}(Q_0)\\ \pi(Q)=\vec{Q}}}\mu_Q.
\end{align*}
Moreover, we have
\begin{align*}
\sum_{\substack{Q_m\in\mc{E}_m\\\pi_j(Q_m)=Q_j\\j=1,\ldots,m-1}}&\lambda_{m,Q_m}
\sum_{\substack{Q\in\mc{S}(Q_0)\\ \pi(Q)=\vec{Q}}}\mu_Q=\int_{Q_0}\!\sum_{\substack{Q_m\in\mc{E}_m\\\pi_j(Q_m)=Q_j\\j=1,\ldots,m-1}}\lambda_{m,Q_m}\sum_{\substack{Q\in\mc{S}(Q_0)\\ \pi(Q)=\vec{Q}}}\frac{\mu_Q}{v_m(Q)} \ind_Q v_m\,\mathrm{d}x\\
&\leq\int_{Q_0}\!\sup_{\substack{Q_m\in\mc{E}_m\\\pi_j(Q_m)=Q_j\\j=1,\ldots,m-1}}\lambda_{m,Q_m}\ind_{Q_m}\sum_{\substack{Q_m\in\mc{E}_m\\\pi_j(Q_m)=Q_j\\j=1,\ldots,m-1}}\sum_{\substack{Q\in\mc{S}(Q_0)\\ \pi(Q)=\vec{Q}}}\frac{\mu_Q}{v_m(Q)} \ind_Q v_m\,\mathrm{d}x\\
&\leq\Big\|\sup_{\substack{Q_m\in\mc{E}_m\\\pi_j(Q_m)=Q_j\\j=1,\ldots,m-1}}\lambda_{m,Q_m}\ind_{Q_m}\Big\|_{L^{p_m}(Q_0,v_m)}\Big\|\sum_{\substack{Q_m\in\mc{E}_m\\\pi_j(Q_m)=Q_j\\j=1,\ldots,m-1}}\sum_{\substack{Q\in\mc{S}(Q_0)\\ \pi(Q)=\vec{Q}}}\frac{\mu_Q}{v_m(Q)} \ind_Q\Big\|_{L^{p_m'}(Q_0,v_m)}\\
&=:\Big\|\sup_{\substack{Q_m\in\mc{E}_m\\\pi_j(Q_m)=Q_j\\j=1,\ldots,m-1}}\lambda_{m,Q_m}\ind_{Q_m}\Big\|_{L^{p_m}(Q_0,v_m)}\times I,
\end{align*}
so that by \eqref{eq:thme1}, we have
\begin{align*}
\sum_{\substack{Q_j\in\mc{E}_j\\j=1,\ldots,m-1}}&\prod_{j=1}^{m-1}\lambda_{j,Q_j}\sum_{\substack{Q_m\in\mc{E}_m\\\pi_j(Q_m)=Q_j\\j=1,\ldots,m-1}}\lambda_{m,Q_m}
\sum_{\substack{Q\in\mc{S}(Q_0)\\ \pi(Q)=\vec{Q}}}\mu_Q\\
&\leq\Big\|\sum_{\substack{Q_j\in\mc{E}_j\\j=1,\ldots,m-1}}\sum_{\substack{Q_m\in\mc{E}_m\\\pi_j(Q_m)=Q_j\\j=1,\ldots,m-1}}\lambda_{m,Q_m}\ind_{Q_m}\Big\|_{L^{p_m}(Q_0,v_m)}\Big(\sum_{\substack{Q_j\in\mc{E}_j\\j=1,\ldots,m-1}}\prod_{j=1}^{m-1}\lambda_{j,Q_j}^{p_m'}I^{p_m'}\Big)^{\frac{1}{p_m'}}\\
&\lesssim\|M^{\mc{D}(Q_0),v_m}(f_mv_m^{-1})\|_{L^{p_m}(Q_0,v_m)}\Big(\sum_{\substack{Q_j\in\mc{E}_j\\j=1,\ldots,m-1}}\prod_{j=1}^{m-1}\lambda_{j,Q_j}^{p_m'}I^{p_m'}\Big)^{\frac{1}{p_m'}}.
\end{align*}

The first factor above satisfies
\[
\|M^{\mc{D}(Q_0),v_m}(f_mv_m^{-1})\|_{L^{p_m}(Q_0,v_m)}\lesssim\|f_mv_m^{-1}\|_{L^{p_m}(Q_0,v_m)}=\|f_m\|_{L^{p_m}_{w_m}(Q_0)},
\]
so it remains to estimate the second factor. Using Lemma~\ref{lem:thme}, we have
\begin{align*}
&\Big(\sum_{\substack{Q_j\in\mc{E}_j\\j=1,\ldots,m-1}}\Big(\prod_{j=1}^{m-1}\lambda_{j,Q_j}\Big)^{p_m'}\Big\|\sum_{\substack{Q_m\in\mc{E}_m\\\pi_j(Q_m)=Q_j\\j=1,\ldots,m-1}}\sum_{\substack{Q\in\mc{S}(Q_0)\\\pi(Q)=\vec{Q}}}\frac{\mu_Q}{v_m(Q)} \ind_Q\Big\|^{p_m'}_{L^{p_m'}(Q_0,v_m)}\Big)^{\frac{1}{p_m'}}\\
&\leq[\vec{w}]_{\vec{p}}\Big(\sum_{\substack{Q_j\in\mc{E}_j\\j=1,\ldots,m-1}}\Big(\prod_{j=1}^{m-1}\lambda_{j,Q_j}\Big)^{p_m'}\sum_{\substack{Q_m\in\mc{E}_m\\\pi_j(Q_m)=Q_j\\j=1,\ldots,m-1}}\sum_{\substack{Q\in\mc{S}(Q_0)\\\pi(Q)=\vec{Q}}}\prod_{j=0}^{m-1}\langle v_j\rangle_{1,Q}^{\frac{p_m'}{p_j}}|Q|\Big)^{\frac{1}{p_m'}}.
\end{align*}
Pick $k\in\{0,\ldots,m-1\}$ such that
\[
[v_{k}]_{\text{FW}}=\min_{j\in\{0,\ldots,m-1\}}[v_j]_{\text{FW}}.
\]
Defining $r\in(1,\infty)$ by $
r'=2^{d+1}[v_{k}]_{\text{FW}}$, it follows from Theorem \ref{thm:sharprh} that
\begin{equation}\label{eq:thme4}
\langle v_{k}^r\rangle_{1,Q}^{\frac{1}{r}}\leq2\langle v_{k}\rangle_{1,Q}
\end{equation}
for all $Q\in\mc{D}$. Defining
\[
u_j:=v_j,\quad \alpha_j:=\tfrac{p_m'}{p_j},\quad\text{and}\quad \theta_j:=\tfrac{p_m'}{p_j}+\tfrac{1}{m}\tfrac{1}{r'}\tfrac{p_m'}{p_{k}}
\]
for $j\neq k$, and
\[
u_{k}:=v_{k}^r,\quad \alpha_{k}:=\tfrac{1}{r}\tfrac{p_m'}{p_{k}},\quad\text{and}\quad \theta_{k}:=\tfrac{1}{r}\tfrac{p_m'}{p_{k}}+\tfrac{1}{m}\tfrac{1}{r'}\tfrac{p_m'}{p_{k}},
\]
we have that $\alpha_j<\theta_j$ for all $j\in\{1,\ldots,m-1\}$ and $\sum_{j=1}^{m-1}\theta_j=1$. Setting $\lambda_{0,Q}=1$, it follows from H\"older's inequality and Kolmogorov's lemma \eqref{eq:kolmogorov} that
\begin{align*}
\sum_{\substack{Q_j\in\mc{E}_j\\j=1,\ldots,m-1}}&\Big(\prod_{j=1}^{m-1}\lambda_{j,Q_j}\Big)^{p_m'}\sum_{\substack{Q_m\in\mc{E}_m\\\pi_j(Q_m)=Q_j\\j=1,\ldots,m-1}}\sum_{\substack{Q\in\mc{S}(Q_0)\\\pi(Q)=\vec{Q}}}\prod_{j=0}^{m-1}\langle v_j\rangle_{1,Q}^{\frac{p_m'}{p_j}}|Q|\\
&\leq \sum_{\substack{Q_j\in\mc{E}_j\\j=1,\ldots,m-1}}\Big(\prod_{j=0}^{m-1}\lambda_{j,Q_j}\Big)^{p_m'}\sum_{\substack{Q_m\in\mc{E}_m\\\pi_j(Q_m)=Q_j\\j=1,\ldots,m-1}}\sum_{\substack{Q\in\mc{S}(Q_0)\\\pi(Q)=\vec{Q}}}\prod_{j=0}^{m-1}\langle u_j\rangle_{1,Q}^{\alpha_j}|Q|\\
&\leq\Big(\sum_{\substack{Q_j\in\mc{E}_j\\j=1,\ldots,m-1}}\prod_{j=0}^{m-1}\lambda_{j,Q_j}^{\frac{p_m'}{\theta_j}}\prod_{j=0}^{m-1}\sum_{\substack{Q_m\in\mc{E}_m\\\pi_j(Q_m)=Q_j\\j=1,\ldots,m-1}}\sum_{\substack{Q\in\mc{S}(Q_0)\\\pi(Q)=\vec{Q}}}\langle u_j\rangle_{1,Q}^{\frac{\alpha_j}{\theta_j}}|Q|\Big)^{\theta_j}\\
&\leq\prod_{j=1}^{m-1}\Big(\sum_{Q_j\in\mc{E}_j}\lambda_{j,Q_j}^{\frac{p_m'}{\theta_j}}\sum_{\substack{Q\in\mc{S}(Q_0)\\\pi_j(Q)=Q_j}}\langle u_j\rangle_{1,Q}^{\frac{\alpha_j}{\theta_j}}|Q|\Big)^{\theta_j}\Big(\sum_{Q\in\mc{S}(Q_0)}\langle u_0\rangle_{1,Q}^{\frac{\alpha_0}{\theta_0}}|Q|\Big)^{\theta_0}\\
&\lesssim\prod_{j=1}^{m-1}\Big(\tfrac{1}{1-\frac{\alpha_j}{\theta_j}}\Big)^{\theta_j}\Big(\sum_{Q_j\in\mc{E}_j}\lambda_{j,Q_j}^{\frac{p_m'}{\theta_j}}\langle u_j\rangle_{1,Q_j}^{\frac{\alpha_j}{\theta_j}}|Q_j|\Big)^{\theta_j}\langle u_0\rangle_{1,Q_0}^{\alpha_0}|Q_0|^{\theta_0}.
\end{align*}
Note that we also have 
\[
\prod_{j=0}^{m-1}\Big(\tfrac{1}{1-\frac{\alpha_j}{\theta_j}}\Big)^{\theta_j}\lesssim(r')^{\sum_{j=0}^{m-1}\theta_j}\eqsim[v_{k}]_{\text{FW}}.
\]

If $k=0$, then we use \eqref{eq:thme4} to estimate $\langle u_0\rangle_{1,Q_0}^{\alpha_0}\lesssim\langle v_0\rangle_{1,Q_0}^{\frac{p_m'}{p_0}}$ and estimate the remaining terms through \eqref{eq:thme1} with
\begin{align*}
\Big(\sum_{Q_j\in\mc{E}_j}\lambda_{j,Q_j}^{\frac{p_m'}{\theta_j}}\langle v_j\rangle_{1,Q_j}^{\frac{\alpha_j}{\theta_j}}|Q_j|\Big)^{\theta_j}&
\leq\Big(\sum_{Q_j\in\mc{E}_j}\lambda_{j,Q_j}^{\frac{p_m'}{\alpha_j}}v_j(Q_j)\Big)^{\alpha_j}\Big(\sum_{Q_j\in\mc{E}_j}|Q_j|\Big)^{\theta_j-\alpha_j}\\
&\lesssim\|M^{\mc{D}(Q_0),v_j}(f_jv_j^{-1})\|^{p_m'}_{L^{p_j}(Q_0,v_j)}|Q_0|^{\theta_j-\alpha_j}\\
&\lesssim\|f_j\|_{L^{p_j}_{w_j}(Q_0)}^{p_m'}|Q_0|^{\theta_j-\alpha_j}.
\end{align*}
As $\theta_0+\sum_{j=1}^{m-1}\theta_j-\alpha_j=\tfrac{p_m'}{p_0}$, this proves the assertion.
If $k\in\{1,\ldots,m-1\}$, we deal with the respective term through
\begin{align*}
\Big(\sum_{Q_k\in\mc{E}_k}\lambda_{k,Q_k}^{\frac{p_m'}{\theta_k}}\langle v_k^r\rangle_{1,Q_k}^{\frac{\alpha_k}{\theta_k}}|Q_k|\Big)^{\theta_k}
&\lesssim \Big(\sum_{Q_k\in\mc{E}_k}\lambda_{k,Q_k}^{\frac{p_m'}{\theta_k}}\langle v_k\rangle_{1,Q_k}^{r\frac{\alpha_k}{\theta_k}}|Q_k|\Big)^{\theta_k}\\
&\leq\Big(\sum_{Q_k\in\mc{E}_k}\lambda_{k,Q_k}^{\frac{p_m'}{r\alpha_k}}v_k(Q_k)\Big)^{r\alpha_k}
\Big(\sum_{Q_k\in\mc{E}_k}|Q_k|\Big)^{\theta_k-r\alpha_k}\\
&\lesssim \|M^{\mc{D}(Q_0),v_k}(f_kv_k^{-1})\|_{L^{p_k}(Q_0,v_k)}^{p_m'}|Q_0|^{\theta_k-r\alpha_k}.
\end{align*}
The remainder of the estimate remains the same, this time noting that 
\[
\theta_0+\theta_k-r\alpha_k+\sum_{\substack{j=1\\j\neq k}}^{m-1}\theta_j-\alpha_j=\tfrac{p_m'}{p_0}.
\]
The result follows.
\end{proof}

\section{Proof of Theorem~\ref{thm:E} in the general case}\label{TheoremAPart2}

\subsection{Linearization}

We first linearize our sparse form domination by estimating the operator norm of $T$ by that of sparse operators. Below,
\[
A^q_{\mc{S}}\vec{f}:=\Big(\sum_{Q\in\mc{S}}\Big(\prod_{j=1}^m\langle f_j\rangle_{1,Q}\Big)^q\ind_Q\Big)^{\frac{1}{q}}
\]
for $q\in(0,\infty]$ with the usual modification when $q=\infty$. 
\begin{proposition}\label{prop:mainlinearization}
Let $T$ be an $m$-sublinear operator satisfying sparse form domination, let $\vec{p}\in[1,\infty]^m$, and let $p \in [\tfrac{1}{m},\infty)$ satisfy $\tfrac{1}{p} = \sum_{j=1}^m \tfrac{1}{p_j}$. If $\vec{w}\in A_{\vec{p}}$, then $T$ is bounded from $L^{\vec{p}}_{\vec{w}}(\R^d)$ to $L^{p,\infty}_w(\R^d)$ with 
\[
\|T\|_{L^{\vec{p}}_{\vec{w}}(\R^d)\to L^{p,\infty}_w(\R^d)}\lesssim\tfrac{1}{1-\frac{\theta}{p}}[\vec{w}]_{\vec{p}}^{1-\theta}\sup_{\mc{S}}\|A^\theta_{\mc{S}}\|_{L^{\vec{p}}_{\vec{w}}(\R^d)\to L_w^{p,\infty}(\R^d)}^\theta
\]
for all $\theta\in(0,p)\cap(0,1]$, where the supremum is taken over all $\tfrac{1}{2\cdot 6^d}$-sparse collections $\mc{S}$ contained in some dyadic grid.
\end{proposition}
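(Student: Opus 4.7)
The plan is to combine Kolmogorov's lemma with the sparse form domination of $T$ and a H\"older split of the resulting cube sum that peels off a single factor of the $A^\theta_{\mc{S}}$-norm, producing the product $[\vec{w}]_{\vec{p}}^{1-\theta}B^\theta$, where I set $B:=\sup_{\mc{S}}\|A^\theta_{\mc{S}}\|_{L^{\vec{p}}_{\vec{w}}(\R^d)\to L^{p,\infty}_w(\R^d)}$.

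By Kolmogorov's lemma \eqref{eq:kolmogorov2}, it suffices to show that for each measurable $E\subseteq\R^d$ with $0<w^p(E)<\infty$ there exists a subset $E'\subseteq E$ with $w^p(E')\geq\tfrac{1}{2}w^p(E)$ satisfying
\[
\int_{E'}\!|T\vec{f}|w^p\,\mathrm{d}x\lesssim\tfrac{1}{1-\theta/p}[\vec{w}]_{\vec{p}}^{1-\theta}B^\theta\|\vec{f}\|_{L^{\vec{p}}_{\vec{w}}(\R^d)}w^p(E)^{1-1/p}.
\]
First I would apply the sparse form hypothesis to $|T\vec{f}|$ and $g=w^p\ind_{E'}$ and, via the $3^d$-lattice theorem together with monotone convergence, reduce the left-hand side to estimating $\sum_{Q\in\mc{S}}a_Q w^p(Q\cap E')$ for a finite, $\tfrac{1}{2\cdot 6^d}$-sparse collection $\mc{S}$ contained in some dyadic grid, where $a_Q:=\prod_{j=1}^m\langle f_j\rangle_{1,Q}$.

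The core step is to split $a_Q=a_Q^\theta\cdot a_Q^{1-\theta}$ and bound the $(1-\theta)$-factor via H\"older's inequality and the $A_{\vec{p}}$ condition,
\[
a_Q^{1-\theta}\leq[\vec{w}]_{\vec{p}}^{1-\theta}\prod_{j=1}^m\langle(f_jw_j)^{p_j}\rangle_{1,Q}^{(1-\theta)/p_j}\langle w^p\rangle_{1,Q}^{-(1-\theta)/p},
\]
and then to apply H\"older's inequality to the cube sum with exponents $\tfrac{1}{\theta}$ and $\tfrac{1}{1-\theta}$, arranged so that the first factor equals $\sum_{Q\in\mc{S}}a_Q^\theta w^p(Q\cap E)=\int_E\!(A^\theta_{\mc{S}}\vec{f})^\theta w^p\,\mathrm{d}x$. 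By the definition of $B$ and Kolmogorov's lemma \eqref{eq:kolmogorov} applied to $A^\theta_{\mc{S}}\vec{f}$ at rate $\theta<p$, this integral is bounded by $\tfrac{p}{p-\theta}B^\theta\|\vec{f}\|^\theta_{L^{\vec{p}}_{\vec{w}}(\R^d)}w^p(E)^{1-\theta/p}$. The remaining H\"older factor, consisting of Muckenhoupt-type weighted averages of $(f_jw_j)^{p_j}$ and $w^p$, is controlled by the sparseness of $\mc{S}$ together with standard maximal-function estimates, producing the remaining $\|\vec{f}\|^{1-\theta}_{L^{\vec{p}}_{\vec{w}}(\R^d)}$ and the correct power of $w^p(E)$. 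The set $E'$ is taken as the complement in $E$ of a suitable dyadic maximal-function super-level set of $w^p$-measure at most $\tfrac{1}{2}w^p(E)$.

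The main obstacle is orchestrating the H\"older arithmetic so that every exponent balances: the $w^p(E)$-exponent must sum to $1-1/p$, and the $B$-exponent must be exactly $\theta$ rather than, say, $\theta^2$, which a naive split would produce. The key point is that Kolmogorov at rate $\theta$ applied to $A^\theta_{\mc{S}}\vec{f}$ estimates $\int_E(A^\theta_{\mc{S}}\vec{f})^\theta w^p\,\mathrm{d}x$ directly by $B^\theta$, so this integral must appear intact inside the H\"older decomposition without any further root extraction. Moreover, for $p\leq 1$ one cannot integrate $A^\theta_{\mc{S}}\vec{f}$ to any power $\geq p$, so the H\"older split must keep all such powers strictly below $p$, which accounts for the restriction $\theta<p$ in the hypotheses.
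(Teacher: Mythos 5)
There is a genuine gap in your core step, and the paper closes it in a fundamentally different way than you propose.

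You split $a_Q=a_Q^\theta a_Q^{1-\theta}$, replace $a_Q^{1-\theta}$ by a cube-dependent $A_{\vec p}$-weighted bound, and then apply H\"older with exponents $\tfrac{1}{\theta}$ and $\tfrac{1}{1-\theta}$ to the cube sum. But in that H\"older the first factor necessarily appears to the power $\theta$, i.e., you produce
\[
\Big(\sum_{Q\in\mc S}a_Q^\theta\,v(Q\cap E)\Big)^{\theta}\cdot(\text{second factor})^{1-\theta},
\]
with $v=w^p$, and after applying Kolmogorov at rate $\theta$ to $\int_E(A^\theta_{\mc S}\vec f)^\theta v\,\mathrm dx$ the $B$-dependence becomes $B^{\theta^2}$, not $B^\theta$. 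You flag this exact issue yourself (``the $B$-exponent must be exactly $\theta$ ... this integral must appear intact without any further root extraction''), but that requirement is incompatible with applying H\"older with exponents $\tfrac{1}{\theta},\tfrac{1}{1-\theta}$ on the sum, so as written the step does not close.

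The paper avoids H\"older on the cube sum entirely and makes your requirement literally true by a different device: a level-set truncation of the sparse family. One sets $\gamma:=(\tfrac{2\cdot 3^d}{v(E)})^{1/p}[\vec w]_{\vec p}$, $\Omega:=\{M^{\mc D}\vec f>\gamma\}$, $E':=E\setminus\Omega$, and $\mc S_+:=\{Q\in\mc S:\prod_j\langle f_j\rangle_{1,Q}\le\gamma\}$. Cubes outside $\mc S_+$ satisfy $Q\subseteq\Omega$ and hence have $\langle v\ind_{E'}\rangle_{1,Q}=0$, so only $\mc S_+$ contributes; on $\mc S_+$ the \emph{uniform} bound $a_Q^{1-\theta}\le\gamma^{1-\theta}$ pulls out as a scalar, giving
\[
\sum_{Q\in\mc S_+}a_Q\,\langle v\ind_E\rangle_{1,Q}|Q|\le\gamma^{1-\theta}\int_E(A^\theta_{\mc S_+}\vec f)^\theta v\,\mathrm dx,
\]
so the integral really does appear to the first power. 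A single application of Kolmogorov then yields exactly $B^\theta$, and $\gamma^{1-\theta}v(E)^{1-\theta/p}\eqsim[\vec w]_{\vec p}^{1-\theta}v(E)^{1-1/p}$ supplies the remaining factors. You do mention constructing $E'$ by removing a dyadic maximal-function superlevel set, which is the right object, but you do not exploit the consequence that $a_Q\le\gamma$ on every surviving cube; that uniform bound, not a weighted H\"older estimate on $a_Q^{1-\theta}$, is the essential mechanism. Your $A_{\vec p}$-weighted pointwise bound on $a_Q^{1-\theta}$ is correct but is not used and does not help here.
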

For $\theta=1$ and $p>1$, an application of Kolmogorov's lemma shows that
\[
\|T\|_{L^{\vec{p}}_{\vec{w}}(\R^d)\to L^{p,\infty}_w(\R^d)}\lesssim p'\sup_{\mc{S}\text{ sparse}}\|A_{\mc{S}}\|_{L^{\vec{p}}_{\vec{w}}(\R^d)\to L_w^{p,\infty}(\R^d)},
\]
so the novelty in Proposition~\ref{prop:mainlinearization} is in the cases $\theta<p\leq 1$. 
\begin{proof}[Proof of Proposition~\ref{prop:mainlinearization}]
Let $E\subseteq\R^d$ with $0<v(E)<\infty$, where $v:=w^p$. By the sparse form domination assumption, \eqref{eq:kolmogorov2}, and the $3^d$-lattice theorem, it suffices to show that for each dyadic grid $\mc{D}$, there exists $E'\subseteq E$ with $v(E')\geq (1-\tfrac{1}{2\cdot 3^d})v(E)$ such that 
\[
\sum_{Q\in\mc{S}}\Big(\prod_{j=1}^m\langle f_j\rangle_{1,Q}\Big)\langle v\ind_E\rangle_{1,Q}|Q|\lesssim\tfrac{1}{1-\frac{\theta}{p}}[\vec{w}]_{\vec{p}}^{1-\theta}\sup_{\mc{S}\text{ sparse}}\|A^\theta_{\mc{S}}\|_{L^{\vec{p}}_{\vec{w}}(\R^d)\to L_w^{p,\infty}(\R^d)}^\theta,
\]
where the supremum is taken over all $\tfrac{1}{2\cdot 6^d}$-sparse collections $\mc{S}\subseteq\mc{D}$. Define
\[
\gamma:=\Big(\frac{2\cdot 3^d}{v(E)}\Big)^{\frac{1}{p}}[\vec{w}]_{\vec{p}},\quad \Omega:=\{x\in E:M^\mc{D}\vec{f}(x)>\gamma\},\quad \text{and}\quad E':=E\setminus\Omega.
\]
Since $\|M^\mc{D}\|_{L^{\vec{p}}_{\vec{w}}(\R^d)\to L^{p,\infty}_w(\R^d)}=[\vec{w}]_{\vec{p}}$, we have $v(\Omega)\leq\big(\tfrac{[\vec{w}]_{\vec{p}}}{\gamma}\big)^p=\tfrac{v(E)}{2\cdot 3^d}$ and so
\[
v(E')\geq v(E)-v(\Omega)\geq\big(1-\tfrac{1}{2\cdot 3^d}\big)v(E).
\]

Define
\[
\mc{S}_+:=\big\{Q\in\mc{S}:\prod_{j=1}^m\langle f_j\rangle_{1,Q}\leq\gamma\big\}.
\]
For any $Q\in\mc{S}\setminus\mc{S}_+$, we have $Q\subseteq\Omega$ so that $\langle v\ind_{E'}\rangle_{1,Q}=0$. Hence, we only need to consider the sum over $\mc{S}_+$. By Kolmogorov's lemma \eqref{eq:kolmogorov}, we have
\begin{align*}
\sum_{Q\in\mc{S}_+}\Big(\prod_{j=1}^m\langle f_j\rangle_{1,Q}\Big)&\langle v\ind_E\rangle_{1,Q}|Q|
\leq\gamma^{1-\theta}\int_E\!(A^\theta_{\mc{S}_+}\vec{f})^{\theta}v\,\mathrm{d}x\\
&\leq \tfrac{1}{1-\frac{\theta}{p}}\gamma^{1-\theta}\|A^\theta_{\mc{S}_+}\vec{f}\|_{L^{p,\infty}(\R^d,v)}^{\theta} v(E)^{1-\frac{\theta}{p}}.
\end{align*}
Since
\[
\gamma^{1-\theta} v(E)^{1-\frac{\theta}{p}}\eqsim[\vec{w}]_{\vec{p}}^{1-\theta}v(E)^{1-\frac{1}{p}},
\]
the assertion follows.
\end{proof}

\subsection{Good-\texorpdfstring{$\lambda$}{lambda} inequality}
We will need a version of the good-$\lambda$ technique from \cite[Theorem~E]{DFPR23}. For a collection of cubes $\mc{F}$, $a:=\{a_Q\}_{Q\in\mc{F}} \subseteq (0,\infty)$, and $r\in(0,\infty]$, we set
\[
A^r_{\mc{F}}(a):=\|\{a_Q\ind_Q\}_{Q\in\mc{F}}\|_{\ell^r(\mc{F})}. 
\]

\begin{theorem}\label{thm:goodlambdav}
If $\mc{D}$ is a dyadic grid, $w\in A_{\text{FW}}$, $\mc{S}\subseteq\mc{D}$ is a finite $\eta$-sparse collection, $a=\{a_Q\}_{Q\in\mc{S}}\subseteq (0,\infty)$, and $q,r\in(0,\infty]$ with $q<r$, then there exists $\delta>0$ such that  
\[
w(\{A^q_{\mc{S}}(a)>2\lambda,\, A^r_{\mc{S}}(a)\leq\gamma^{\frac{1}{q}-\frac{1}{r}}\lambda\})\lesssim e^{-\frac{\delta\eta}{\gamma[w]_{\text{FW}}}} w(\{A^q_{\mc{S}}(a)>\lambda\})
\]
for all $\lambda, \gamma>0$, where $\delta$ only depends on $d$, $q$, and $r$. 
\end{theorem}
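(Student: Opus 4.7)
The plan is a classical good-$\lambda$ argument on the level sets of $A^q_{\mc{S}}(a)$, with two key inputs: Hölder's inequality to pass correctly from the $\ell^r$-hypothesis to a counting-function condition, and the reverse Jensen inequality (equivalent to the Fujii--Wilson condition via Theorem~\ref{thm:sharprh}) to transfer a Lebesgue estimate to $w$.

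First, using finiteness of $\mc{S}$, I write $\{A^q_{\mc{S}}(a)>\lambda\}$ as a disjoint union of maximal dyadic cubes $\{P_i\}\subseteq\mc{D}$. Maximality supplies a point $y$ in the dyadic parent of $P_i$ at which $A^q_{\mc{S}}(a)(y)\leq\lambda$; since every $Q\in\mc{S}$ strictly containing $P_i$ also contains $y$, we get $\sum_{Q\in\mc{S},\,Q\supsetneq P_i}a_Q^q\leq\lambda^q$, and hence
\[
A^q_{\mc{S}}(a)(x)^q \leq \lambda^q+A^q_{\mc{S}(P_i)}(a)(x)^q\qquad(x\in P_i).
\]
Thus $\{A^q_{\mc{S}}(a)>2\lambda\}\cap P_i\subseteq\{A^q_{\mc{S}(P_i)}(a)>c_q\lambda\}\cap P_i$ with $c_q:=(2^q-1)^{1/q}$, and summation over $i$ reduces the problem to the local bound
\[
w\bigl(\{A^q_{\mc{S}(P_i)}(a)>c_q\lambda,\ A^r_{\mc{S}(P_i)}(a)\leq s\}\cap P_i\bigr)\lesssim e^{-\delta\eta/(\gamma[w]_{\text{FW}})}\,w(P_i),
\]
where $s:=\gamma^{1/q-1/r}\lambda$.

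The main obstacle, in my view, is extracting the correct power of $\gamma$ in the exponent: using only the pointwise bound $a_Q\leq s$ (which is all that $A^r\leq s$ asserts when $r=\infty$) would give the weaker decay $\exp(-c\eta/\gamma^{1-q/r})$. To exploit the full $\ell^r$-strength, set $N(x):=\#\{Q\in\mc{S}(P_i):Q\ni x\}$ and apply Hölder's inequality with exponent $r/q>1$:
\[
A^q_{\mc{S}(P_i)}(a)(x)^q\leq N(x)^{1-q/r}\Bigl(\sum_{Q\in\mc{S}(P_i):\,Q\ni x}a_Q^r\Bigr)^{q/r}\leq N(x)^{1-q/r}\,s^q.
\]
Combined with $A^q_{\mc{S}(P_i)}(a)(x)>c_q\lambda$, this forces $N(x)>K:=c_q^{qr/(r-q)}/\gamma$, so the local bad set is contained in $\{N>K\}\cap P_i$.

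To conclude, I quantify $|\{N>K\}\cap P_i|$ and transfer to $w$. Since $\mc{S}(P_i)$ is $\eta$-sparse, the standard sparse Carleson/John--Nirenberg argument (starting from $\|N\|_{\BMO(P_i)}\lesssim\eta^{-1}$) yields $|\{N>k\}\cap P_i|\lesssim e^{-c\eta k}|P_i|$ for a dimensional $c>0$. Separately, $w\in A_{\text{FW}}$ together with Theorem~\ref{thm:sharprh} gives the reverse-Jensen inequality $w(F)/w(P_i)\lesssim(|F|/|P_i|)^{c'/[w]_{\text{FW}}}$ for all $F\subseteq P_i$ and some dimensional $c'>0$. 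Applied to $F=\{N>K\}\cap P_i$, this produces the local bound with $\delta:=cc'c_q^{qr/(r-q)}$; summation over $i$ finishes the proof, and the case $r=\infty$ is a direct specialization with $1-q/r$ read as $1$ and $A^r_{\mc{S}}$ as the supremum.
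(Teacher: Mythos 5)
Your proposal is correct and follows essentially the same route as the paper: decompose $\{A^q_{\mc{S}}(a)>\lambda\}$ into maximal dyadic cubes, use a point in the dyadic parent to peel off the tail and reduce to a local estimate on $A^q_{\mc{S}(P_i)}(a)$, apply Hölder with the height (counting) function $N=h_{\mc{S}(P_i)}$ to force $N>c_q^{qr/(r-q)}/\gamma$, and then combine John--Nirenberg for $N$ with the sharp reverse Hölder/$A_{\text{FW}}$ estimate to pass to $w$-measure. This is precisely the content of the paper's Lemma~\ref{lem:maximalcubes} plus Lemma~\ref{lem:jnheight}, so the proof is the same one.
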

\noindent As a consequence, we have that for all $p\in(0,\infty)$, $s\in(0,\infty]$, and $q\leq r$, we have
\[
\|A^q_{\mc{S}}(a)\|_{L^{p,s}(\R^d,w)}\lesssim_{s}\big(\tfrac{1}{\eta}[w]_{\text{FW}}\big)^{\frac{1}{q}-\frac{1}{r}}\|A^r_{\mc{S}}(a)\|_{L^{p,s}(\R^d,w)},
\]
where $\|f\|_{L^{p,s}(\R^d,w)}:= \big(\int_0^{\infty}(tw(\{|f|>t\}))^{\frac{s}{p}}\,\frac{dt}{t}\big)^{\frac{1}{s}}$ for $s<\infty$.

The proof of Theorem~\ref{thm:goodlambdav} relies on the following weighted John-Nirenberg inequality. 
\begin{lemma}\label{lem:jnheight}
If $\mc{D}$ is a dyadic grid, $w\in A_{\text{FW}}(\mc{D})$, $\mc{S}\subseteq\mc{D}$ is an $\eta$-sparse collection, $Q_0\in\mc{S}$, and 
$
h_{\mc{S}(Q_0)}:=\sum_{\substack{Q\in\mc{S}\\ Q\subseteq Q_0}}\ind_Q,
$
then there exists $\delta>0$ such that
\[
w\big(\{x\in Q_0:h_{\mc{S}(Q_0)}(x)>\lambda\}\big)\lesssim e^{-\frac{\eta\delta}{[w]_{\text{FW}}}\lambda}w(Q_0)
\]
for all $\lambda>0$, where $\delta$ only depends on $d$.
\end{lemma}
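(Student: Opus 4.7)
The plan is to first establish an exponential Lebesgue-measure decay for $\{h_{\mathcal{S}(Q_0)} > \lambda\}$ via a Markov iteration driven by the Carleson packing of sparse families, and then to upgrade this to the $w$-measure using the sharp reverse Hölder inequality of Theorem~\ref{thm:sharprh}.

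First, I will set up generations. Define recursively $\mathcal{P}_0 := \{Q_0\}$ and $\mathcal{P}_{j+1}$ to be the collection of maximal elements of $\mathcal{S}(Q_0)$ strictly contained in some cube of $\mathcal{P}_j$. Since $h_{\mathcal{S}(Q_0)}$ is integer-valued,
\[
\{h_{\mathcal{S}(Q_0)} > k\} = \bigsqcup_{P \in \mathcal{P}_k} P
\]
for every integer $k \geq 0$, and moreover $h_{\mathcal{S}(Q_0)}(x) = h_{\mathcal{S}(P)}(x) + m$ for $x \in P$ whenever $P \in \mathcal{P}_m$, since the cubes of $\mathcal{S}(Q_0)$ containing such an $x$ are exactly the $m$ strict ancestors of $P$ together with the cubes of $\mathcal{S}(P)$ containing $x$.

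Next, I will obtain the Lebesgue decay. The $\eta$-sparseness of $\mathcal{S}$ yields the Carleson packing $\sum_{Q \in \mathcal{S}(P)} |Q| \leq \eta^{-1}|P|$ for every $P \in \mathcal{S}$, because the sets $E_Q \subseteq Q$ are disjoint subsets of $P$ with $|E_Q| \geq \eta|Q|$. Equivalently, $\int_P h_{\mathcal{S}(P)}\,\mathrm{d}x \leq |P|/\eta$, so Markov's inequality gives $|\{h_{\mathcal{S}(P)} > k_0\} \cap P| \leq |P|/2$ for $k_0 := \lceil 2/\eta \rceil$. Applying this to each $P \in \mathcal{P}_{jk_0}$ and summing over the pairwise disjoint family $\mathcal{P}_{jk_0}$, the shift identity produces
\[
|\{h_{\mathcal{S}(Q_0)} > (j+1)k_0\}| \leq \tfrac{1}{2}\,|\{h_{\mathcal{S}(Q_0)} > jk_0\}|.
\]
Iterating gives $|\{h_{\mathcal{S}(Q_0)} > jk_0\}| \leq 2^{-j}|Q_0|$, which, after interpolating in $\lambda$, yields $|\{h_{\mathcal{S}(Q_0)} > \lambda\}| \leq 2\, e^{-c\eta\lambda}\,|Q_0|$ for every $\lambda > 0$, with $c>0$ absolute.

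Finally, I will upgrade to the $w$-measure. Choose $r \in (1,\infty)$ so that $r' = 2^{d+1}[w]_{\text{FW}}$; Theorem~\ref{thm:sharprh} then gives $\langle w\rangle_{r,Q_0} \leq 2\langle w\rangle_{1,Q_0}$. Hölder's inequality applied to any measurable $E \subseteq Q_0$ yields
\[
w(E) \leq |E|^{1/r'}|Q_0|^{1/r}\langle w\rangle_{r,Q_0} \leq 2 \Big(\tfrac{|E|}{|Q_0|}\Big)^{1/r'} w(Q_0).
\]
Taking $E = \{h_{\mathcal{S}(Q_0)} > \lambda\}$ and combining with the Lebesgue bound proves the lemma with $\delta := c/2^{d+1}$, which depends only on $d$. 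The main obstacle will be setting up the Markov iteration correctly; the shift identity $h_{\mathcal{S}(Q_0)}|_P = h_{\mathcal{S}(P)} + m$ on $P \in \mathcal{P}_m$ is the mechanism that converts a single-scale Markov estimate into geometric decay across generations, after which the sharp reverse Hölder inequality performs the unweighted-to-weighted transfer essentially for free.
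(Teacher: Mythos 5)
Your proof is correct. Step two (the unweighted-to-weighted transfer via Theorem~\ref{thm:sharprh} and H\"older's inequality) is essentially identical to the paper's: both exploit the sharp $A_{\text{FW}}$ estimate $w(E)/w(Q_0)\leq 2(|E|/|Q_0|)^{1/(2^{d+1}[w]_{\text{FW}})}$. Where you differ is in step one. The paper simply observes that $h_{\mc{S}(Q_0)}\in\text{BMO}(\mc{D})$ (with BMO norm $\lesssim 1/\eta$, implicitly via the Carleson packing) and quotes the John--Nirenberg inequality to get the Lebesgue-measure decay $|\{h_{\mc{S}(Q_0)}>\lambda\}|\lesssim e^{-\eta\delta\lambda}|Q_0|$. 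You instead give a self-contained derivation of the same estimate: the generation decomposition $\mc{P}_0,\mc{P}_1,\dots$, the identity $\{h_{\mc{S}(Q_0)}>k\}=\bigsqcup_{P\in\mc{P}_k}P$, the shift identity $h_{\mc{S}(Q_0)}|_P=m+h_{\mc{S}(P)}$ on $P\in\mc{P}_m$, and a Markov/Carleson-packing step that halves the measure after every $\lceil 2/\eta\rceil$ generations. This is precisely the content of (a proof of) the John--Nirenberg inequality for sparse height functions, unrolled by hand. The paper's route is shorter since it leans on a known black box; yours is more elementary and makes the $\eta$-dependence in the exponent transparent rather than tracking it through a BMO norm bound. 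Both arrive at the same final constant $\delta\eqsim 1/2^{d+1}$ depending only on $d$.
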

\begin{proof}
As $h_{\mc{S}(Q_0)}\in\text{BMO}(\mc{D})$, it follows from the John-Nirenberg inequality that
\[
|\{x\in Q_0:h_{\mc{S}(Q_0)}(x)>\lambda\}|\lesssim e^{-\eta\delta\lambda}|Q_0|.
\]
The result then follows from the sharp $A_{\text{FW}}$ condition
\[
\frac{w(E)}{w(Q)}\leq2\left(\frac{|E|}{|Q|}\right)^{\frac{1}{2^{d+1}[w]_{\text{FW}}}},
\]
which follows from 
Theorem \ref{thm:sharprh} with $Q=Q_0$ and $E=\{x\in Q_0:h_{\mc{S}(Q_0)}(x)>\lambda\}$.
\end{proof}
\begin{lemma}\label{lem:maximalcubes}
If $\mc{D}$ is a dyadic grid, $\mc{F}\subseteq\mc{D}$ is a finite collection of cubes, $a=\{a_Q\}_{Q\in\mc{F}} \subseteq (0,\infty)$, and $r\in(0,\infty]$, then for each $\lambda>0$, there exists a pairwise disjoint collection $\mc{Q}\subseteq\mc{D}$ such that $\{A^r_{\mc{F}}(a)>\lambda\}=\bigcup_{Q\in\mc{Q}}Q$ and the dyadic parent $\widehat{Q}$ of each $Q\in\mc{Q}$ intersects $\{A_{\mc{F}}^r(a)\leq\lambda\}$.
\end{lemma}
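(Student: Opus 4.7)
The plan is to exploit dyadic rigidity: since $\mc{F}$ is finite, $A^r_{\mc{F}}(a)$ is constant on sufficiently small dyadic cubes, so the superlevel set $L:=\{A^r_{\mc{F}}(a)>\lambda\}$ is itself a union of dyadic cubes, and one takes $\mc{Q}$ to be the maximal such cubes. Concretely, I would let $s_{\min}$ denote the minimum side length appearing among the finitely many cubes in $\mc{F}$, which is strictly positive, and verify that $A^r_{\mc{F}}(a)$ is constant on every dyadic cube $R\in\mc{D}$ of side length at most $s_{\min}$. Indeed, given $x\in R$ and $Q\in\mc{F}$, since $R$ is no larger than $Q$, the dyadic dichotomy forces $R\subseteq Q$ when $x\in Q$ and $R\cap Q=\emptyset$ otherwise; hence every indicator $\ind_Q$ with $Q\in\mc{F}$ is constant on $R$, and therefore so is $A^r_{\mc{F}}(a)$. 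In particular, if $x\in L$, then the cube $R\ni x$ lies entirely in $L$, showing that $L$ is a union of dyadic cubes in $\mc{D}$.

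Next, the set $L\subseteq\bigcup_{Q\in\mc{F}}Q$ is bounded, so the collection
\[
\mc{Q}:=\{Q\in\mc{D}:Q\text{ is maximal among dyadic cubes contained in }L\}
\]
is well-defined; the dyadic dichotomy makes $\mc{Q}$ a pairwise disjoint family, while the previous step guarantees that every point of $L$ lies in some member of $\mc{Q}$, so $L=\bigcup_{Q\in\mc{Q}}Q$.

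Finally, maximality forces the dyadic parent $\widehat{Q}$ of each $Q\in\mc{Q}$ to fail to be contained in $L$, whence $\widehat{Q}\setminus L$ is nonempty, which is exactly to say that $\widehat{Q}$ intersects $\{A^r_{\mc{F}}(a)\leq\lambda\}$. The only step requiring any care is the existence of maximal dyadic cubes contained in $L$, but this is ensured by the boundedness of $L$ combined with the discrete scaling $\{2^{-j}:j\in\Z\}$ of dyadic side lengths; no genuine obstacle is anticipated.
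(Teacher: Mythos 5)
Your proof is correct, and it follows a genuinely different (though closely related) route from the paper's. The paper argues pointwise: for each $x$ in the superlevel set $E=\{A^r_{\mc{F}}(a)>\lambda\}$, it considers the cube $Q(x):=\bigcap_{Q\in\mc{F},\,Q\ni x}Q$, notes that $Q(x)\in\mc{D}$ because $\mc{F}$ is a finite subset of a dyadic grid (so the cubes containing $x$ form a nested chain), and then observes that any $y\in Q(x)$ lies in every $Q\in\mc{F}$ that $x$ lies in, whence $A^r_{\mc{F}}(a)(y)\geq A^r_{\mc{F}}(a)(x)>\lambda$ and $Q(x)\subseteq E$. You instead fix a uniform scale $s_{\min}$ (the smallest side length occurring in $\mc{F}$) and show $A^r_{\mc{F}}(a)$ is constant on every $R\in\mc{D}$ of side at most $s_{\min}$ via dyadic dichotomy, which also exhibits $E$ as a union of dyadic cubes. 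Both arguments rest on finiteness and dyadic nesting; the paper's variant adapts the witnessing cube to the point $x$ (and so is marginally more economical), while yours uses a single scale valid everywhere and makes the constancy of $A^r_{\mc{F}}(a)$ on small cubes explicit, which some readers may find more transparent. Your remark on boundedness (that $E\subseteq\bigcup_{Q\in\mc{F}}Q$, which is bounded since $\mc{F}$ is finite, so maximal dyadic subcubes exist) is a useful detail that the paper's proof leaves implicit; both proofs in fact need it.
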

\begin{proof}
It suffices to show that for every $x\in E:=\{A^r_{\mc{F}}(a)>\lambda\}$, there is a cube $Q\in\mc{D}$ such that $x\in Q$ and $Q\subseteq E$. The collection of maximal cubes $\mc{Q}\subseteq\mc{D}$ contained in $E$ satisfies the desired properties.

Let $x\in E$ and let $Q(x):=\bigcap_{\substack{Q\in\mc{F}\\ x\in Q}}Q$. Then, as $\mc{F}$ is finite, $Q(x)\in\mc{D}$ by the intersection property of the dyadic grid. We claim that $Q(x)\subseteq E$. Indeed, let $y\in Q(x)$ and let $Q\in\mc{F}$ be a cube satisfying $x\in Q$. Then by definition of $Q(x)$, we also have $y\in Q$. Hence,
\[
\lambda<A^r_{\mc{F}}(a)(x)=\Big(\sum_{\substack{Q\in\mc{F}\\ Q\ni x}}a_Q^r\Big)^{\frac{1}{r}}\leq \Big(\sum_{\substack{Q\in\mc{F}\\ Q\ni y}}a_Q^r\Big)^{\frac{1}{r}}=A^r_{\mc{F}}(a)(y).
\]
We conclude that $y\in E$, proving the claim. The result follows.
\end{proof}

\begin{proof}[Proof of Theorem~\ref{thm:goodlambdav}]
By homogeneity, it suffices to prove the case $\lambda=1$. Set
\[
\Omega:=\{A^q_{\mc{S}}(a)>2,\, A^r_{\mc{S}}(a)\leq\gamma^{\frac{1}{q}-\frac{1}{r}}\}.
\]
Use Lemma~\ref{lem:maximalcubes} to decompose $\{A^q_{\mc{S}}(a)>1\}=\bigcup_{Q\in\mc{Q}}Q$, where $\mc{Q}\subseteq\mc{D}$ is the disjoint collection of maximal cubes in $\{A^q_{\mc{S}}(a)>1\}$. As the $Q\in \mc{Q}$ cover $\Omega$, it suffices to prove 
\[
w(\Omega\cap Q)\lesssim e^{-\frac{\delta\eta}{\gamma[w]_{\text{FW}}}} w(Q)
\]
for all $Q\in\mc{Q}$.
Fix $Q\in\mc{Q}$ and pick $\widehat{x}\in\widehat{Q}$ for which $A^q_{\mc{S}}(a)(\widehat{x})\leq 1$. Then we have 
\[
2^q<A^q_{\mc{S}(Q)}(a)(x)^q+\sum_{\substack{Q'\in\mc{S}\\\widehat{Q}\subseteq Q'}}a_{Q'}^q
\leq A_{\mc{S}(Q)}(a)(x)^q+A^q_{\mc{S}}(a)(\widehat{x})^q\leq A^q_{\mc{S}(Q)}(a)(x)^q+1
\]
for $x\in\Omega\cap Q$. Hence, by H\"older's inequality, we have
\[
(2^q-1)^{\frac{1}{q}}<A^q_{\mc{S}(Q)}(a)(x)\leq A^r_{\mc{S}(Q)}(a)(x)h_{\mc{S}(Q)}(x)^{\frac{1}{q}-\frac{1}{r}}\leq(\gamma h_{\mc{S}(Q)}(x))^{\frac{1}{q}-\frac{1}{r}}.
\]
By Lemma~\ref{lem:jnheight}, there exists $\delta>0$ depending on $d$, $q$, and $r$ such that
\[
w(\Omega\cap Q)\leq w\Big(\Big\{x\in Q: h_{\mc{S}(Q)}(x)>(2^q-1)^{\frac{\frac{1}{q}}{\frac{1}{q}-\frac{1}{r}}}\tfrac{1}{\gamma}\Big\}\Big)\lesssim e^{-\frac{\delta\eta}{\gamma[w]_{\text{FW}}}} w(Q),
\]
proving the first assertion. The second follows from a standard good-$\lambda$ argument.
\end{proof}

\subsection{Proof of Theorem~\ref{thm:E} in the general case}
\begin{proof}[Proof of Theorem~\ref{thm:E} in the general case]
Set $v:=w^p$. Let $\mc{D}$ be a dyadic grid, $\mc{S}\subseteq\mc{D}$ be an $\tfrac{1}{2\cdot 6^d}$-sparse collection, and $\vec{f}\in L^{\vec{1}}_{\vec{w}}(\R^d)$. By monotone convergence, we may assume that $\mc{S}$ is finite. By Theorem~\ref{thm:goodlambdav} with $a_Q=\prod_{j=1}^m\langle f_j\rangle_{1,Q}$, $q=\theta=\tfrac{1}{2m}$, and $r=\infty$, we have
\begin{align*}
\|A^\theta_{\mc{S}}\vec{f}\|_{L^{p,\infty}(\R^d,v)}
&\lesssim ([v]_{\text{FW}})^{\frac{1}{\theta}}\|A^\infty_{\mc{S}}(a)\|_{L^{p,\infty}(\R^d,v)}\\
&\leq([v]_{\text{FW}})^{\frac{1}{\theta}}\|M^{\mc{D}}\vec{f}\|_{L_w^{p,\infty}(\R^d)}\\
&\leq([v]_{\text{FW}})^{\frac{1}{\theta}}[\vec{w}]_{\vec{p}}\|\vec{f}\|_{L_{\vec{w}}^{\vec{p}}(\R^d)}.
\end{align*}
Thus, the result follows from Proposition~\ref{prop:mainlinearization}.
\end{proof}

\section{Proof of Theorem \ref{MLMultiplierWeakType}}\label{TheoremBSection}

We prove the cases $p\geq 1$ and $p<1$ separately. We start with the case $p\geq 1$.

\begin{proof}[Proof of Theorem~\ref{MLMultiplierWeakType} in the case $p\geq 1$]
    As in the proof of Proposition~\ref{prop:mainlinearization}, using \eqref{eq:kolmogorov}, the sparse form domination
    \[
    \int_{E'}\!|T(\vec{f}/\vec{w})| w\,\mathrm{d}x\lesssim\sum_{Q\in\mc{S}}\Big(\prod_{j=1}^m\langle f_jw_j^{-1}\rangle_{1,Q}\Big)\langle w\ind_{E'}\rangle_{1,Q}|Q|,
    \]
    and the $3^d$ lattice theorem, it suffices to show that for every $E\subset\R^d$ with $0<|E|<\infty$ and every dyadic grid $\mc{D}$, there exists a set $E'\subseteq E$ with $|E'|\geq (1-\tfrac{1}{2\cdot 3^d})|E|$ such that for all $\tfrac{1}{2\cdot 6^d}$-sparse collections $\mc{S}\subset\mc{D}$, we have
    \begin{equation}\label{eq:MLMultiplierWeakTypeeq1}
    \sum_{Q\in\mc{S}}\Big(\prod_{j=1}^m\langle f_jw_j^{-1}\rangle_{1,Q}\Big)\langle w\ind_{E'}\rangle_{1,Q}|Q|\lesssim [w^p]_{\text{FW}}[\vec{w}]_{\vec{p}}|E|^{1-\frac{1}{p}}
    \end{equation}
    for all non-negative $f_j\in L^{p_j}(\R^d)$ with $\|f_j\|_{L^{p_j}(\R^d)}=1$ and $j\in\{1,\ldots,m\}$. 
    
    Let $E \subseteq \R^d$ with $0 < |E| < \infty$ and let $\mc{D}$ be a dyadic grid. For a positive constant $K$ to be fixed below, we define for each $j \in \{1, \ldots m\}$ such that $p_j \neq \infty$
    \[
        \Omega_j := \{x \in \R^d : M^\mathcal{D}(f_j^{p_j})(x) > \tfrac{K}{|E|}\}.
      \]
    Forming the Calderón-Zygmund decomposition of $f_j^{p_j}$ 
    at height $\tfrac{K}{|E|}$, we obtain a collection of disjoint cubes $\mc{P}_j\subseteq\mc{D}$ and functions $g_j$ and $b_j$ such that 
    \begin{align*}
        \Omega_j = \bigcup_{P\in\mc{P}_j} P, &
        \quad f_j^{p_j} = g_j + b_j,
        \quad \|g_j\|_{L^1(\R^d)} \lesssim 1,
        \quad \|g_j\|_{L^\infty(\R^d)} \lesssim K/|E|,\\
        & \text{supp}(b_j) \subseteq \Omega_j,
        \quad \text{and}\quad \avg{b_j}_{P} = 0 \text{ for all $P\in\mc{P}_j$}.
    \end{align*}
    Since $\|M^\mathcal{D}\|_{L^1(\R^d)\rightarrow L^{1,\infty}(\R^d)}=1$ and $\|f_j\|_{L^{p_j}(\R^d)} = 1$, fixing $K= 2m\cdot 3^d$ we have
    \[
        |\Omega_j| = \big|\big\{x \in \R^d : M^D(f_j^{p_j})(x) > \tfrac{K}{|E|}\big\}\big| \leq \tfrac{|E|}{K}=\tfrac{|E|}{2m\cdot 3^d}.
    \]
    Setting $\Omega := \bigcup_{\{j : p_j \neq \infty\}} \Omega_j$ and $E' := E \setminus \Omega$, we have $|E'| \geq(1-\tfrac{1}{2\cdot 3^d})|E|$.
    
    Since $w^p \in A_{\text{FW}}$, for $\nu\in(1,\infty)$ defined through $\nu' = 2^{d+1}[w^p]_{\text{FW}}$, it follows from the sharp reverse Hölder inequality Theorem~\ref{thm:sharprh} that
    \[
    \langle w\rangle_{p\nu,Q}=\langle w^p\rangle_{\nu,Q}^{\frac{1}{p}}\lesssim_p \langle w^p\rangle_{1,Q}^{\frac{1}{p}}=\langle w\rangle_{p,Q}
    \]
    for all $Q\in\mc{D}$. Fix $r$ such that $r' = (p\nu)' + 1$. Then we have
    \[
        1 < r < \nu, \qquad (r')^r \lesssim \nu' \lesssim [w]_{\text{FW}},\qquad \text{and}\qquad\frac{(pr)'}{(p\nu)'} =  \frac{1}{p} + \frac{1}{(p\nu)'} = r.
    \]    
    By our assumption that $\|f_j\|_{L^{p_j}(\R^d)} = 1$ for $j \in \{1, \ldots, m\}$, we have for $p_j=\infty$ that 
    \[
        \avg{f_jw_j^{-1}}_{1,Q} \leq \|f_j\|_{L^\infty(\R^d)}\avg{w_j^{-1}}_{1,Q} = \avg{w_j^{-1}}_{p_j',Q}.
    \]
    Applying the sparse bound for $T$, Hölder's inequality for $p_j \neq \infty$, the $A_{\vec{p}}$ condition, the sharp reverse H\"older inequality for $w^p$, and the sparseness of the collection $\mathcal{S}$, we have
    \begin{align*}
        \sum_{Q\in\mathcal{S}}\Big(\prod_{j=1}^m&\avg{f_jw_j^{-1}}_{1,Q}\Big)\avg{ w\ind_{E'}}_{1,Q}|Q| \\
        &\lesssim \sum_{Q \in \mathcal{S}}\Big(\prod_{p_j \neq \infty}\avg{f_j}_{p_j,Q}\Big)\Big( \prod_{j=1}^{m}\avg{w_j^{-1}}_{p_j',Q}\Big)\avg{w}_{p\nu,Q}\avg{\ind_{E'}}_{(p\nu)',Q}|Q| \\
        &\lesssim \sum_{Q \in \mathcal{S}}\Big(\prod_{p_j \neq \infty}\avg{f_j}_{p_j,Q}\Big)\Big(\prod_{j=1}^{m}\avg{w_j^{-1}}_{p_j',Q}\Big)\avg{w}_{p,Q}\avg{\ind_{E'}}_{(p\nu)',Q}|Q|  \\
        &\lesssim [\vec{w}]_{A_{\vec{p}}} \sum_{Q \in \mathcal{S}}\Big(\prod_{p_j \neq \infty}\avg{f_j}_{p_j,Q}\Big)\avg{\ind_{E'}}_{(p\nu)',Q}|E_Q|.
    \end{align*}
    
    Let $Q \in \mathcal{S}$. If $Q \subseteq \Omega$, then $\avg{\ind_{E'}}_{(p\nu)',Q} = 0$, since $E' \cap \Omega = \emptyset$. Therefore, the non-zero terms in the above sum correspond to $Q$ that intersect $\R^d \setminus \Omega$. For such $Q$, if $Q \cap P \neq \emptyset$ then either $Q \subseteq P$ or $P \subseteq Q$, and since $P \subseteq \Omega$, we must have that $P \subseteq Q$.  Therefore, for each $j \in \{1,\ldots, m\}$ such that $p_j \neq \infty$, we have
    \[
      \avg{f_j}_{p_j,Q} = (\avg{g_j}_Q + \avg{b_j}_Q)^{\frac{1}{p_j}} = \Big(\avg{g_j}_Q + |Q|^{-1}\sum_{\substack{P\in\mc{P}_j\\ P \subseteq Q}} \int_P\!b_j\,\mathrm{d}x\Big)^{\frac{1}{p_j}} = \avg{g_j}_Q^{\frac{1}{p_j}},
    \]
    since $\avg{b_j}_{P} = 0$ for any $P\in\mc{P}_j$. We estimate the final term above with H\"older's inequality with exponents $pr$ and $p_jr$, the norm bounds for $g_j$, and the boundedness of $M$:
    \begin{align*}
        \sum_{Q \in \mathcal{S}}\Big(\prod_{p_j \neq \infty}\avg{f_j}_{p_j,Q}\Big)&\avg{\ind_{E'}}_{(p\nu)',Q}|E_Q|= \sum_{Q \in \mathcal{S}}\Big(\prod_{p_j \neq \infty}\avg{g_j}_{1,Q}^\frac{1}{p_j}\Big)\avg{\ind_{E'}}_{(p\nu)',Q}|E_Q|\\
        &\leq \sum_{Q \in \mathcal{S}}\int_{E_Q}\!\Big(\prod_{p_j \neq \infty}(Mg_j)^{\frac{1}{p_j}}\Big)M(\ind_{E'})^{\frac{1}{(p\nu)'}}\,dx\\
        &\leq \Big(\prod_{p_j \neq \infty}\|Mg_j\|_{L^{r}(\R^d)}^{\frac{1}{p_j}}\Big)\|M(\ind_{E'})\|_{L^r(\R^d)}^{\frac{1}{(p\nu)'}}\\
        &\lesssim (r')^{\frac{1}{p}}(r')^{\frac{1}{(p\nu)'}}\Big(\prod_{p_j \neq \infty}\|g_j\|_{L^r(\R^d)}^{\frac{1}{p_j}}\Big)|E'|^{\frac{1}{(pr)'}}\\ &\lesssim[w^p]_{\text{FW}}\Big(\prod_{p_j \neq \infty}(\|g_j\|_{L^\infty(\R^d)}^{\frac{1}{r}}\|g_j\|_{L^1(\R^d)}^{\frac{1}{r}})^{\frac{1}{p_j}}\Big)|E'|^{\frac{1}{(pr)'}}\\
        &\lesssim [w^p]_{\text{FW}}|E|^{\frac{1}{(pr)'}-\frac{1}{pr'}}\\
        &= [w^p]_{\text{FW}}|E|^{1-\frac{1}{p}}.
    \end{align*}
    Combining these two estimates yields \eqref{eq:MLMultiplierWeakTypeeq1},
    as desired.
\end{proof}

\begin{proof}[Proof of Theorem~\ref{MLMultiplierWeakType} in the case $p<1$]
    Observe that
        \[
            \|T(\vec{f}/\vec{w})w\|_{L^{p,\infty}(\R^n)} = \|T(\vec{f}/\vec{w})^pw^p\|_{L^{1,\infty}(\R^d)}^{\frac{1}{p}
            }.
        \]
    We proceed as in the proof of the case $p\geq 1$, replacing the sparse form domination with sparse form domination of $\ell^p$ type, to see that
    \[
    \int_{E'}\!|T(\vec{f}/\vec{w})|^p w\,\mathrm{d}x\lesssim\sum_{Q\in\mc{S}}\Big(\prod_{j=1}^m\langle f_jw_j^{-1}\rangle_{1,Q}\Big)^p\langle w^p\ind_{E'}\rangle_{1,Q}|Q|.
    \]
    We next show that for every $E\subseteq\R^d$ with $0<|E|<\infty$ and every dyadic grid $\mc{D}$ there exists a set $E'\subseteq E$ with $|E'|\geq (1-\tfrac{1}{2\cdot 3^d})|E|$ so that for all $\tfrac{1}{2\cdot 6^d}$-sparse collections $\mc{S}\subseteq\mc{D}$, we have
    \begin{equation}\label{eq:MLMultiplierWeakTypeeq2}
    \sum_{Q\in\mc{S}}\Big(\prod_{j=1}^m\langle f_jw_j^{-1}\rangle_{1,Q}\Big)^p\langle w^p\ind_{E'}\rangle_{1,Q}|Q|\lesssim [w^p]^p_{\text{FW}}[\vec{w}]^p_{\vec{p}}
    \end{equation}
    for all non-negative $f_j\in L^{p_j}(\R^d)$ with $\|f_j\|_{L^{p_j}(\R^d)}=1$. 
    
    Let $E\subseteq\R^d$ with $0<|E|<\infty$, let $\mc{D}$ be a dyadic grid, and define $E'$ and $\nu\in(1,\infty)$ exactly as in the proof of the case $p\geq 1$. For $r\in(1,\infty)$ such that $r' = 2\nu'$, we have
        \[
            1 < r < \nu\quad\text{and}\quad r' \eqsim [w^p]_{\text{FW}}.
        \]    
        Similar to the argument for $p \geq 1$, by Hölder's inequality for the $p_j \neq \infty$, the $A_{\vec{p}}$ condition, the sharp reverse H\"older inequality for $w^p$, and the sparseness of the collection $\mathcal{S}$, we have
        \begin{align*}
         \sum_{Q\in\mathcal{S}}\Big(\prod_{j=1}^{m} &\avg{f_jw_j^{-1}}_{1,Q}\Big)^p\avg{ w^p\ind_{E'}}_{1,Q}|Q| \\
          &\lesssim \sum_{Q \in \mathcal{S}}\Big(\prod_{p_j \neq \infty}\avg{f_j}_{p_j,Q}\Big)^p\Big( \prod_{j=1}^{m}\avg{w_j^{-1}}_{p_j',Q}\Big)^p\avg{w^p}_{\nu,Q}\avg{\ind_{E'}}_{\nu',Q}|Q| \\
          &\lesssim \sum_{Q \in \mathcal{S}}\Big(\prod_{p_j \neq \infty}\avg{f_j}_{p_j,Q}\Big)^p\Big( \prod_{j=1}^{m}\avg{w_j^{-1}}_{p_j',Q}\Big)^p\avg{w^p}_{1,Q}\avg{\ind_{E'}}_{\nu',Q}|Q| \\
          &\lesssim [\vec{w}]_{\vec{p}}^p 
          \sum_{Q \in \mathcal{S}}\Big(\prod_{p_j \neq \infty}\avg{f_j}_{p_j,Q}\Big)^p\avg{\ind_{E'}}_{\nu',Q}|E_Q|.
        \end{align*}
        We estimate the final term above using H\"older's inequality with exponents $r$ and $p_jr$, the norm bounds for $g_j$ and the operator bounds for the maximal operator: 
        \begin{align*}
        \sum_{Q \in \mathcal{S}}\Big(\prod_{p_j \neq \infty}\avg{f_j}_{p_j,Q}\Big)^p&\avg{\ind_{E'}}_{\nu',Q}|E_Q|= \sum_{Q \in \mathcal{S}}\Big(\prod_{p_j \neq \infty}\avg{g_j}_{1,Q}^{\frac{p}{p_j}}\Big)\avg{\ind_{E'}}_{\nu',Q}|E_Q|\\
            &\leq \sum_{Q \in \mathcal{S}}\int_{E_Q}\Big(\prod_{p_j \neq \infty}(Mg_j)^{\frac{p}{p_j}}\Big)M(\ind_{E'})^{\frac{1}{\nu'}}\,dx\\
            &\leq \Big(\prod_{p_j \neq \infty}\|Mg_j\|_{L^{r}(\R^d)}^{\frac{p}{p_j}}\Big)\|M(\ind_{E'})\|_{L^2(\R^d)}^{\frac{1}{r'}}\\
            &\lesssim (r')^p\Big(\prod_{p_j \neq \infty}\|g_j\|_{L^r(\R^d)}^{\frac{p}{p_j}}\Big)|E'|^{\frac{1}{r'}}\\
            &\lesssim [w^p]_{\text{FW}}^p|E|^{\frac{1}{r'}-\frac{1}{r'}}\\
            &= [w^p]_{\text{FW}}^p.
        \end{align*}
    Combining these two estimates implies \eqref{eq:MLMultiplierWeakTypeeq2}, as desired.
\end{proof}

\appendix

\section{Concluding remarks} 
\label{app:A}



It would be interesting to obtain a version of Theorem~\ref{thm:E} that unifies \eqref{eq:zor1} and Theorem~\ref{thm:E} 
in terms of the quantity
\[
[\vec{w}]_{\text{FW}_{\text{prod}}}^{\vec{p}}:=\sup_Q\Big(\prod_{j=1}^m v_j(Q)^{\frac{1}{p_j}}\Big)^{-1}\Big(\int_Q\!M_{\vec{p}}\big(v_1^{\frac{1}{p_1}}\ind_Q,\ldots,v_m^{\frac{1}{p_m}}\ind_Q\big)^p\,\mathrm{d}x\Big)^{\frac{1}{p}},
\]
where $v_j:= w_j^{-p_j'}$. 
We here prove 
\[
[\vec{w}]_{\text{FW}_{\text{prod}}}^{\vec{p}}\lesssim\min_{j\in\{1,\ldots,m\}}[v_j]_{\text{FW}}^{\frac{1}{p}}\leq[\vec{w}]_{\vec{p}}^{\min\big(\frac{p_1'}{p},\ldots,\frac{p_m'}{p}\big)}
\]
for $\vec{w}\in A_{\vec{p}}$. We refer to 
\cite[Section~3.3]{Ni20} for further discussion of these constants.

\begin{proposition}\label{AppendixProp}
If $\vec{p}\in(1,\infty]^m$, $p \in (\tfrac{1}{m},\infty)$ satisfies $\frac{1}{p}=\sum_{j=1}^{m} \frac{1}{p_j}$, and $\vec{w}$ are weights such that $v_j\in A_{\text{FW}}$ for some $j \in \{1,\ldots,m\}$, then
\[
[\vec{w}]_{\text{FW}_{\text{prod}}}^{\vec{p}}\lesssim\min_{j\in\{1,\ldots,m\}}[v_j]_{\text{FW}}^{\frac{1}{p}},
\]
where $v_j:=w_j^{-p_j'}$. Moreover, if $\vec{w}\in A_{\vec{p}}$, then 
\[
\min_{j\in\{1,\ldots,m\}}[v_j]_{\text{FW}}^{\frac{1}{p}}\leq[\vec{w}]_{\vec{p}}^{\min\big(\frac{p_1'}{p},\ldots,\frac{p_m'}{p}\big)}.
\]
\end{proposition}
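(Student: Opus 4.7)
The plan is to prove the two inequalities separately. The first follows from a routine application of the unweighted strong-type bound for $M_{\vec{p}}$ together with the trivial observation $[v_k]_{\text{FW}}\geq 1$. The second requires deriving a quantitative linear $A_{mp_j'}$ bound on each $v_j$ from the multilinear $A_{\vec{p}}$ condition by a careful generalized H\"older argument, and then invoking the standard embedding $[v]_{\text{FW}}\lesssim[v]_{A_s}$.

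For the first inequality, I apply the strong-type estimate $\|M_{\vec{p}}\|_{L^{\vec{p}}(\R^d)\to L^p(\R^d)}\lesssim 1$, which is a standard consequence of multilinear Carleson embedding or sparse domination for $M_{\vec{p}}$, to the functions $v_j^{1/p_j}\ind_Q$, whose $L^{p_j}$-norms equal $v_j(Q)^{1/p_j}$. This yields
\[
\int_Q M_{\vec{p}}(v_1^{1/p_1}\ind_Q,\ldots,v_m^{1/p_m}\ind_Q)^p\,\mathrm{d}x\lesssim \prod_{j=1}^m v_j(Q)^{p/p_j},
\]
so $[\vec{w}]_{\text{FW}_{\text{prod}}}^{\vec{p}}\lesssim 1$. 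Since $M(\ind_Q)\geq \ind_Q$ forces $[v_k]_{\text{FW}}\geq 1$ for every $k$, the first inequality follows upon minimizing over those $k$ with $v_k\in A_{\text{FW}}$.

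For the second inequality, I aim to show $[v_j]_{A_{mp_j'}}\leq[\vec{w}]_{\vec{p}}^{p_j'}$ for each $j$. Raising the $A_{\vec{p}}$ inequality to the $p_j'$-th power immediately gives
\[
\langle v_j\rangle_{1,Q}\leq[\vec{w}]_{\vec{p}}^{p_j'}\langle w\rangle_{p,Q}^{-p_j'}\prod_{i\neq j}\langle v_i\rangle_{1,Q}^{-p_j'/p_i'},
\]
so by the definition of the linear $A_{mp_j'}$ class it suffices to verify
\[
\langle v_j^{-1/(mp_j'-1)}\rangle_{1,Q}^{mp_j'-1}\leq \langle w\rangle_{p,Q}^{p_j'}\prod_{i\neq j}\langle v_i\rangle_{1,Q}^{p_j'/p_i'}.
\]
Writing $v_j^{-1/(mp_j'-1)}=w_j^{p_j'/(mp_j'-1)}=w^{p_j'/(mp_j'-1)}\prod_{i\neq j}w_i^{-p_j'/(mp_j'-1)}$ and applying generalized H\"older's inequality with exponents $\alpha_0:=p(mp_j'-1)/p_j'$ and $\alpha_i:=p_i'(mp_j'-1)/p_j'$ for $i\neq j$, the identity $\frac{1}{p}=\sum_i\frac{1}{p_i}$ yields $\frac{1}{\alpha_0}+\sum_{i\neq j}\frac{1}{\alpha_i}=1$, and the factorization gives exactly the required bound. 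Combining with the known inequality $[v_j]_{\text{FW}}\lesssim[v_j]_{A_{mp_j'}}$, choosing $j^*$ to minimize $p_j'$, and using $[\vec{w}]_{\vec{p}}\geq 1$ to pass the minimum into the exponent gives $\min_j[v_j]_{\text{FW}}^{1/p}\lesssim[\vec{w}]_{\vec{p}}^{\min_j p_j'/p}$. The main obstacle will be the H\"older bookkeeping in the above step: the specific exponents are essentially forced by $\sum 1/p_i=1/p$, and one must verify that all powers multiply out correctly on both sides.
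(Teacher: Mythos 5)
Your proof of the second inequality is correct and actually supplies a computation that the paper only asserts: the generalized H\"older argument with exponents $\alpha_0=p(mp_j'-1)/p_j'$ and $\alpha_i=p_i'(mp_j'-1)/p_j'$ (which sum reciprocally to $1$ by $\sum_i 1/p_i' = m - 1/p + 1/p_j$) verifies $[v_j]_{A_{mp_j'}}\leq[\vec{w}]_{\vec{p}}^{p_j'}$ cleanly, and the standard embedding $[v_j]_{\text{FW}}\lesssim[v_j]_{A_{mp_j'}}$ then finishes it. The paper states this chain of inequalities without proof; your derivation is a valid supplement.

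The proof of the first inequality, however, is wrong, and the error is fundamental rather than a matter of bookkeeping. The claimed estimate $\|M_{\vec{p}}\|_{L^{\vec{p}}(\R^d)\to L^p(\R^d)}\lesssim 1$ is false: $M_{\vec{p}}$ is weak type $(\vec{p},p)$ but not strong type at the endpoint. Already in the linear case $m=1$ with $p_1=p$, one has $M_p f=(M|f|^p)^{1/p}$, so $\|M_p f\|_{L^p}^p=\|M(|f|^p)\|_{L^1}$, which is generically infinite. More pointedly, if your argument were valid it would show $[\vec{w}]_{\text{FW}_{\text{prod}}}^{\vec{p}}\lesssim 1$ for \emph{every} weight vector $\vec{w}$, independent of any $A_{\text{FW}}$ hypothesis; specializing to $m=1$ this reads $\int_Q M(v\ind_Q)\,\mathrm{d}x\lesssim v(Q)$ for every weight $v$, i.e.\ that every weight is in $A_{\text{FW}}$, which is obviously false. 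The hypothesis that $v_j\in A_{\text{FW}}$ for \emph{some} $j$ must be used in an essential way, and it is not used at all in your argument (the factor $[v_k]_{\text{FW}}\geq 1$ is merely decorative). The paper's actual proof passes through the Carleson-type characterization of $[\vec{w}]_{\text{FW}_{\text{prod}}}^{\vec{p}}$ from \cite[Remark~3.3.2]{Ni20} (namely that $\sum_{Q'\in\mc{S}(Q)}\big(\prod_j\langle v_j\rangle_{1,Q'}^{p/p_j}\big)|Q'|\lesssim\big(\prod_j\langle v_j\rangle_{1,Q}^{p/p_j}\big)|Q|$ with optimal constant $\eqsim([\vec{w}]_{\text{FW}_{\text{prod}}}^{\vec{p}})^p$), then applies the sharp reverse H\"older inequality from Theorem~\ref{thm:sharprh} to the one $v_j\in A_{\text{FW}}$ with gain $r'\eqsim[v_j]_{\text{FW}}$ to push the exponents below the threshold $\sum\alpha_j<1$, and finally runs a H\"older/Kolmogorov argument as in Lemma~\ref{lem:multilineartesting3}. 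That is where the factor $[v_j]_{\text{FW}}$ comes from, and there is no shortcut via a strong-type bound for $M_{\vec{p}}$.
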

\begin{proof}
It is shown in \cite[Remark~3.3.2]{Ni20} that $[\vec{v}]_{\text{FW}_{\text{prod}}}^{\vec{p}}<\infty$ if and only if for all sparse collections $\mc{S}$ in a dyadic grid $\mc{D}$ and all $Q\in\mc{D}$ we have
\[
\sum_{Q'\in\mc{S}(Q)}\Big(\prod_{j=1}^m\langle v_j\rangle_{1,Q'}^{\frac{p}{p_j}}\Big)|Q'|\lesssim \Big(\prod_{j=1}^m\langle v_j\rangle_{1,Q}^{\frac{p}{p_j}}\Big)|Q|,
\]
and that the optimal constant is equivalent to $\big([\vec{v}]_{\text{FW}_{\text{prod}}}^{\vec{p}}\big)^p$. Without loss of generality, assume that $v_m\in A_{\text{FW}}$. Define $r\in(1,\infty)$ through $r'=2^{d+1}[v_m]_{\text{FW}}$. It follows from the sharp reverse H\"older inequality from Theorem \ref{thm:sharprh} that
\begin{equation}\label{eq:appa1}
\langle v_m^r\rangle_{1,Q}^{\frac{1}{r}}\leq 2\langle v_m\rangle_{1,Q}
\end{equation}
for all $Q\in\mc{D}$. Set $\alpha_j:=\tfrac{p}{p_j}$ for $j\in\{1,\ldots,m-1\}$ and $\alpha_m:=\tfrac{1}{r}\tfrac{p}{p_m}$. Then
\[
\sum_{j=1}^m\alpha_j=1-\frac{1}{r'}\frac{p}{p_m}<1.
\]
Defining
\[
\theta_j:=\alpha_j+\frac{1}{m}\Big(1-\sum_{j=1}^m\alpha_j\Big),
\]
we have $\theta_j=\tfrac{p}{p_j}+\tfrac{1}{m}\tfrac{1}{r'}\tfrac{p}{p_m'}$ for $j\in\{1,\ldots,m-1\}$ and $\theta_m=\tfrac{1}{r}\tfrac{p}{p_j}+\tfrac{1}{m}\tfrac{1}{r'}\tfrac{p}{p_m'}$. Exactly as in the proof of Lemma~\ref{lem:multilineartesting3}, we then find that
\begin{align*}
\sum_{Q'\in\mc{S}(Q)}\Big(\prod_{j=1}^m\langle v_j\rangle_{1,Q'}^{\frac{p}{p_j}}\Big)|Q'|&\leq \sum_{Q'\in\mc{S}(Q)}\Big(\prod_{j=1}^{m-1}\langle v_j\rangle_{1,Q'}^{\frac{p}{p_j}}\Big)\langle v_m^r\rangle_{1,Q'}^{\frac{1}{r}\frac{p}{p_m'}}|Q'|\\
&\lesssim\Big(\prod_{j=1}^m\big(\tfrac{1}{1-\frac{\alpha_j}{\theta_j}}\big)^{\theta_j}\Big)\Big(\prod_{j=1}^{m-1}\langle v_j\rangle_{1,Q}^{\frac{p}{p_j}}\Big)\langle v_m^r\rangle_{1,Q}^{\frac{1}{r}\frac{p}{p_m'}}|Q|\\
&\lesssim (r')^{\sum_{j=1}^m\theta_j}\prod_{j=1}^m\langle v_j\rangle_{1,Q}^{\frac{p}{p_j}}\\
&\eqsim [v_m]_{\text{FW}}\prod_{j=1}^m\langle v_j\rangle_{1,Q}^{\frac{p}{p_j}},
\end{align*}
where in the last inequality we used \eqref{eq:appa1}. This proves the first result.

The second property holds since
\[
[v_j]_{\text{FW}}\lesssim[v_j]_{A_{mp_j'}}\leq[\vec{w}]^{p_j'}_{\vec{p}}.
\]
The result follows.
\end{proof}


Proposition \ref{AppendixProp} and the bound
$[\vec{w}]_{\text{FW}_{\text{prod}}}^{\vec{p}}\leq[\vec{v}]_{\text{FW}}^{\vec{p}}\lesssim[\vec{w}]_{\vec{p}}^{\max\big(\tfrac{p_1'}{p_1},\ldots,\tfrac{p_m'}{p_m}\big)}$ from \cite{Zor19} give
\[
[\vec{w}]_{\text{FW}_{\text{prod}}}^{\vec{p}}\lesssim [\vec{w}]_{\vec{p}}^{\min(\gamma,\delta)},
\]
where
\[
\gamma:=\min\Big(\frac{p_1'}{p},\ldots,\frac{p_m'}{p}\Big) \quad\text{and}\quad
\delta:=\max\Big(\frac{p_1'}{p_1},\ldots,\frac{p_m'}{p_m}\Big).
\]
While $\tfrac{p_j'}{p}\geq\tfrac{p_j'}{p_j}$ for all $j\in\{1,\ldots,m\}$, whether $\gamma$ or $\delta$ is smaller depends on $\vec{p}$.  

We conclude by remarking that our proofs extend to the two-weight setting where the product weight $w$ is replaced by a general weight. Moreover, the bound 
\[\|T\vec{f}\|_{L^{p,\infty}_w(\R^d)}\lesssim [\vec{w}]^{\min(\alpha,\beta)}_{\vec{p}}\|\vec{f}\|_{L^{\vec{p}}_{\vec{w}}(\R^d)}
\] 
of Theorem \ref{thm:E} can be extrapolated beyond the restrictions $p>1$ and $p_j<\infty$ with \cite[Theorem~4.10]{Ni19}, and such an extrapolation yields a smaller exponent than $\beta$ in this extended range. We leave these details to the interested reader.





\bibliography{babel}
\bibliographystyle{alpha-sort}
\end{document}